\documentclass[reqno,12pt]{amsart}

\usepackage[margin=1.25in]{geometry}

	\usepackage{comment}

	\usepackage[utf8]{inputenc}

	\usepackage{amsmath,amsfonts,amssymb,amsthm}

	\usepackage[T1]{fontenc}

	\usepackage{etoolbox}
	\patchcmd{\section}{\scshape}{\scshape\bfseries}{}{}
	\makeatletter
	\renewcommand{\@secnumfont}{\scshape\bfseries}
	\makeatother

	\let\epsilon\varepsilon

	\usepackage{parskip}

	\usepackage[bookmarks=true,bookmarksopen=true]{hyperref}

    \usepackage{appendix}

    \usepackage{multicol}
    
    \usepackage{blkarray} 

    \usepackage{subcaption}

	\newtheorem{theorem}{Theorem}    
	
	\newtheorem{conjecture}[theorem]{Conjecture}
	
	\newtheorem{corollary}[theorem]{Corollary}
        
	\newtheorem{lemma}[theorem]{Lemma}
	
	\newtheorem{main}{Theorem}

	\numberwithin{theorem}{section}
	
	\theoremstyle{plain}

	\theoremstyle{definition}
	
	\newtheorem{example}[theorem]{Example}
	
	\theoremstyle{remark}

	\numberwithin{equation}{section}
	
	\numberwithin{table}{section}

	\usepackage{tikz}
	\usetikzlibrary{matrix}
    \usetikzlibrary{calc}

    \usepackage[all,cmtip]{xy}

	\usepackage{todonotes}

	\usepackage{enumerate}

	\usepackage{float}

	\makeatletter
	\def\blfootnote{\gdef\@thefnmark{}\@footnotetext}
	\makeatother

	\DeclareMathOperator{\codim}{codim}		\DeclareMathOperator{\cod}{\codim}

	\DeclareMathOperator{\Ric}{Ric}

	\newcommand{\of}[1]{\left( #1 \right)}

	\newcommand{\gH}{\mathsf{H}}

	\newcommand{\SO}{\mathsf{SO}}

	\newcommand{\gT}{\mathsf{T}}

	\newcommand{\bZ}{\mathbb{Z}}	\newcommand{\Z}{\bZ}

\date{\today}

\title{Weighted coloop splittings in rank six}

\author{James Dylan Douthitt}
\address{Department of Mathematics, Syracuse University}
\email{email: jddouthi@syr.edu}

\author{Lee Kennard}
\address{Department of Mathematics, Syracuse University}
\email{email: ltkennar@syr.edu}

\author{Josef Komissar}
\address{Department of Mathematics, Syracuse University}
\email{email: jskomiss@syr.edu}

\begin{document}

\begin{abstract}
With a view toward applications in Riemannian geometry, we explore coloop splitting properties of regular matroids. Nienhaus showed by classification in rank four that a regular matroid has a cocircuit whose deletion yields two coloops unless the matroid takes a particular form. In the latter case, one can split off any element of the ground set as a coloop. We reprove this using Seymour's structure theorem for regular matroids and prove an extension to matroids of ranks five and six. As an application to Riemannian geometry, we prove that the torus symmetry assumption in a recent result of Mouill\'e, Nienhaus, and the second author can be relaxed from rank ten to rank nine.
\end{abstract}
\maketitle

In \cite{QiaoZhang10}, the authors prove every simple graph with at least four vertices and minimum degree three has a cycle with at least two chords. Equivalently, there exists a cycle whose contraction produces two loops. 
As we observe in this paper, this result holds without the assumption of simple if we add the assumption of 2-vertex-connectivity, an assumption which is natural for our applications to matroid results.

\begin{main}\label{thm:BonusColoop-Cographic}
    If $G$ is a $2$-vertex-connected graph with minimum degree at least three, then there exists a cycle $C$ such that $G/C$ has a vertex with two loops. %has two loops.$G$ has a cycle with at least two chords.
\end{main}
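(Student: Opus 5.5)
We split according to whether $G$ is simple, and in the non-simple case we run a longest-path argument directly. First note what we need. If $C$ is a cycle of $G$, then in $G/C$ the vertices of $C$ become a single vertex, and an edge of $G$ becomes a loop at that vertex exactly when it is not an edge of $C$ but has both ends on $C$. Such edges are chords of $C$, parallel copies of edges of $C$, and loops of $G$ at vertices of $C$. So it suffices to find a cycle $C$ together with two distinct edges $e,f\notin E(C)$, each having both endpoints on $V(C)$.

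\emph{Simple case.} If $G$ is simple, then minimum degree three forces $|V(G)|\ge 4$. The result of \cite{QiaoZhang10} then gives a cycle with two chords, and we are done. In the general case we do not reduce to the simple case. That would require deleting loops and parallel edges, which can destroy the degree condition. Instead we work with $G$ itself.

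\emph{Longest path.} Let $P=v_0v_1\cdots v_k$ be a longest path in $G$. Every non-loop edge at $v_0$ has its other end on $P$, since otherwise $P$ extends. The degree of $v_0$ is at least $3$, and the edge $v_0v_1$ of $P$ accounts for one of this. So $v_0$ carries at least one of the following:
\begin{enumerate}[(i)]
\item a loop;
\item a second edge parallel to $v_0v_1$;
\item an edge $v_0v_j$ with $j\ge 2$.
\end{enumerate}
The same holds at $v_k$.

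Let $a$ be the largest index with an edge $v_0v_a$ different from the path edge, or $a=1$ if there is none. Let $b$ be the smallest index with an edge $v_bv_k$ different from the path edge, or $b=k-1$ if there is none. The candidate cycles are $C_0=v_0Pv_a v_0$, which closes with the extra edge at $v_0$, and $C_k=v_bPv_kv_b$. For $C_0$, the remaining edge-ends at $v_0$ (a loop, further parallel edges, or edges $v_0v_i$ with $i\le a$) are all edges with both ends on $C_0$. If $v_0$ has degree at least $4$, or carries a loop together with some edge $v_0v_a$ with $a\ge 2$, this already gives two such edges.

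\emph{Using both endpoints.} When $v_0$ alone supplies only one extra edge, we combine the two ends. If $a\ge b$, the cycle $C=v_0Pv_b\,v_k\,Pv_a\,v_0$ is available; it runs along $P$, uses the jumps $v_0v_a$ and $v_bv_k$, and skips $v_{b+1}\dots v_{a-1}$ appropriately. Alternatively we use the Hamiltonian-on-$P$ cycle through $v_0v_a$ and $v_bv_k$, chosen so that the leftover edge at $v_0$ and the leftover edge at $v_k$ both have ends on $C$. If $a<b$, the segment $v_a\cdots v_b$ separates the two ends along $P$. Here $2$-vertex-connectivity enters. Since $v_a$ is not a cut vertex, some edge or path leaves $v_0Pv_{a}$ avoiding $v_a$. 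Because $P$ is longest, it must land on $P$ beyond $v_a$. This gives a longer cycle through $v_0$ that still contains $v_0Pv_a$ and hence all extra edges at $v_0$, and we iterate toward $v_k$.

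\emph{Main obstacle.} We expect the hardest part to be this last step: making the chord counting at $v_0$ and $v_k$ simultaneous, and in particular guaranteeing that the two witnessed edges are distinct. They could coincide, for instance when the only extra edge at both ends is $v_0v_k$. We also have to handle the degenerate small cases where loops or multi-edges make $k$ very small, such as $k=1$, or two vertices joined by three parallel edges. Those small configurations we will check by hand. For the overlap issue we would first try choosing, among longest paths, one maximizing $a$, and argue as in the classical proof for simple graphs that the extremal choice forces the second chord.
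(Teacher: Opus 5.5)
\textbf{There is a genuine gap.} Your reformulation and your longest-path setup agree with the paper: it suffices to find a cycle $C$ and two distinct edges outside $E(C)$ with both ends on $V(C)$. Citing Qiao--Zhang for simple graphs is also fine. But the non-simple case, which is the real content, is not proved, and the steps you sketch fail as written.

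\begin{enumerate}
\item[(1)] A loop at $v_0$ together with $v_0v_a$ gives only one edge with both ends on $C_0$, namely the loop, because $v_0v_a\in E(C_0)$. For the same reason, $\deg v_0\ge 4$ is not enough when a loop contributes two to the degree.
\item[(2)] The cycle $v_0Pv_b\,v_k\,Pv_a\,v_0$ exists only when $b<a$; for $a=b$ the two segments meet at $v_a$. It also omits $v_{b+1},\dots,v_{a-1}$. The leftover edges at $v_0$ and $v_k$ may end in this omitted segment, and then they are not chords. The spanning cycle formed by $P$, $v_0v_a$ and $v_bv_k$ exists only when $a=b+1$, which nothing forces.
\item[(3)] For $a<b$, an escape route $Q$ from $\{v_0,\dots,v_{a-1}\}$ avoiding $v_a$ may start at any $v_i$ and pass through vertices off $P$. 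Maximality of $P$ only forces the neighbors of ends of longest paths on $V(P)$, such as $v_{a-1}$, onto $P$. Even granting $Q$ from $v_i$ to $v_m$ with $m>a$, the cycle $v_0Pv_i\,Q\,v_mPv_av_0$ omits $v_{i+1},\dots,v_{a-1}$ unless $i=a-1$. So it need not contain $v_0Pv_a$ or keep the extra edge at $v_0$ as a chord, and ``iterate toward $v_k$'' has no invariant.
\end{enumerate}

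You also defer both the coincidence problem and the small cases. Two vertices joined by three parallel edges is an actual counterexample: contracting any $2$-cycle leaves one loop. It must be excluded by requiring $|V(G)|\ge 3$ for $2$-connectedness, not checked by hand.

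\textbf{The missing idea} is that you never need the far end $v_k$; the paper stays at one end and reroutes locally.
\begin{itemize}
\item Let $u_1u_j$ be a non-path edge at $u_1$ with $j$ maximal. Let $e$ be a second non-path edge at $u_1$: a loop, a parallel edge, or an edge to some $u_i$ with $i\le j$. Then $C=u_1\cdots u_ju_1$ has the chord $e$.
\item $2$-connectivity gives $j\ge 3$. If $j=3$, it also gives an edge from $u_2$ to some $u_k$ with $k>3$.
\item Now consider $u_{j-1}$. It is an end of the longest path $u_{j-1}\cdots u_1u_j\cdots u_p$, so all its neighbors lie on $P$. There are three possibilities:
\begin{itemize}
\item $C$ already has a second chord; or
\item $u_{j-1}u_k$ is an edge for some $k>j$, and then $u_1\cdots u_{j-1}u_ku_{k-1}\cdots u_ju_1$ has chords $e$ and $u_{j-1}u_j$ (this covers $j=3$); or
\item $e=u_1u_{j-1}$, both $u_1$ and $u_{j-1}$ have degree three, and $j\ge 4$.
\end{itemize}
\item In the last case, $u_2$ is an end of the longest path $u_2\cdots u_{j-1}u_1u_j\cdots u_p$. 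Its third edge cannot be a chord of $C$, so it goes to some $u_k$ with $k>j$. Then $u_2\cdots u_{j-1}u_1u_j\cdots u_ku_2$ has chords $u_1u_2$ and $u_{j-1}u_j$.
\end{itemize}
This argument treats loops and parallel edges uniformly, so the separate appeal to the simple case is unnecessary.
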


For a cographic matroid, this result translates to the following statement: {\it If $M$ is a simple connected cographic matroid, then there is a cocircuit whose deletion leaves two coloops.} The main technical result of this paper is to prove a graphic version of this statement when the graph is not the complete graph. The statement involves replacing the assumption on the minimum degree by one on the girth. Following the notation of \cite{Diestel-book}, we write $K_d$ to denote the complete graph on $d$ vertices and $P_n$ to denote the path on $n$ edges.

\begin{main}\label{thm:BonusColoop-Graphic}
    Let $G$ be a $2$-vertex-connected simple graph. 
    \begin{enumerate}
        \item
        \label{thm:BonusColoop-Graphic_notKd} 
        If $G\neq K_d$, then there exists a non-empty minimal cut set $K$ of edges such that $G\backslash K = H\oplus P_2$ for a subgraph $H$ of $G$.
        \item
        \label{thm:BonusColoop-Graphic_Kd}
        If $G=K_d$ and $e \in E(G)$, then there exists a minimal cut set $K$ of edges such that $G\backslash K = H\oplus P_1$ for a subgraph $H$ of $G$, where $E(P_1) = \{e\}$.
    \end{enumerate}
\end{main}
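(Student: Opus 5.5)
The plan is to reduce both parts to choosing a vertex bipartition. Throughout, I read $G\backslash K = H\oplus P_2$ in the matroid sense: $G\backslash K$ has two coloops, i.e.\ two bridges. For $S\subseteq V(G)$, write $\delta(S)$ for the set of edges with exactly one end in $S$. Recall that $\delta(S)$ is a minimal cut set (a bond) exactly when $G[S]$ and $G[V\setminus S]$ are both connected. In that case $G\backslash\delta(S)=G[S]\sqcup G[V\setminus S]$, and the coloops of $G\backslash\delta(S)$ are the bridges of the two induced subgraphs. So in each part we must find $S$ with both sides connected, and then count bridges.

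Part (\ref{thm:BonusColoop-Graphic_Kd}) is immediate. Let $e=uv$ and $S=\{u,v\}$. Then $G[S]$ is the single edge $e$, and $G[V\setminus S]=K_{d-2}$ is connected; the degenerate cases $d\le 3$ need only a separate check, and for $d=2$ we take $K=\emptyset$. Hence $\delta(S)$ is a bond, and $e$ is a bridge of $G\backslash\delta(S)$.

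For part (\ref{thm:BonusColoop-Graphic_notKd}), $G$ is not complete, so there are nonadjacent vertices. A shortest path between two of them yields an induced path $x\,y\,z$ with $xz\notin E(G)$. If $S=\{x,y,z\}$ has $G-S$ connected, then $G[S]=P_2$ and both of its edges are bridges, so $K=\delta(S)$ works. The general step is therefore to find an induced $P_2$ whose removal leaves a connected graph. When no such path exists, the fallback is to enlarge $S$ to a connected set whose induced subgraph, or whose complement, still has at least two bridges.

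I would run an extremal argument. Among all pairs $(S,C)$ with $G[S]$ connected and having at least two bridges, and $C$ a component of $G-S$, choose one with $|C|$ maximal. Suppose some other component $D$ of $G-S$ exists. By $2$-vertex-connectivity, $D$ has at least two attachment vertices in $S$. I would then absorb $D$ into $S$ to obtain $S'=S\cup V(D)$. Its complement contains $C$ together with whatever is attached, so in the natural cases $C$ is enlarged, or at least preserved. The delicate point, and the main obstacle, is to absorb $D$ without destroying both bridges of $G[S]$, because $D$ may reconnect the two sides of a bridge. To handle this, I would choose $S$ to be an induced path $P=p_0p_1\cdots p_k$ with $k\ge2$, and absorb only a path through $D$ whose ends attach to vertices of $P$ as close together as possible. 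The new induced subgraph is then a path plus one ear, i.e.\ a cycle with pendant paths. It retains at least two bridges unless the ear spans essentially all of $P$. In that last case I would instead reroute $P$ through $D$ and argue that $|C|$ still does not decrease.

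If this bookkeeping becomes too messy, the alternative is induction on $|V(G)|$: contract an edge, or delete a vertex, while keeping $2$-connectivity and non-completeness, and lift a good bond back. In either approach, the key place where simplicity and $G\ne K_d$ are used is to guarantee the initial induced path of length two.
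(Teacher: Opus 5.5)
Part (2) of your proposal is the paper's argument and is fine for $d\ge 3$, which is the intended range; when $d=2$, $\emptyset$ is not a cut set. Part (1) has a genuine gap. The whole difficulty is to produce a connected $S$ with two bridges in $G[S]$ and $G-S$ connected, and that step is left open.

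Your extremal scheme cannot close as written. If $S'=S\cup V(D)$, the components of $G-S'$ are exactly those of $G-S$ other than $D$. So $C$ is preserved but never enlarged, and maximality of $|C|$ gives no contradiction. A secondary potential such as $|S|$ would be needed. But after one absorption $G[S]$ is no longer a path, so your ear analysis stops applying. Absorbing only an ear $Q$ through $D$ instead leaves the pieces of $D-Q$ as new, uncontrolled components.

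The case you defer is also the basic one: $S=xyz$ an induced $P_2$ with another component $D$ attached to both $x$ and $z$. There every ear through $D$ destroys both bridges. This really occurs, e.g.\ in $K_{2,3}$ with $S=a_1b_1a_2$. ``Reroute $P$ through $D$ so that $|C|$ does not decrease'' is precisely the step that needs proof, and it needs a strict improvement. The induction alternative is likewise only named.

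The missing idea, which is the entire content of the paper's proof of Lemma~\ref{lem:Graphic_notKd}, is never to enlarge $S$ but to move it. Restrict to induced $P_2$'s $S=\{u,v,w\}$, and choose $(S,A)$ with $A$ a component of $G-S$ of maximum order. Suppose $B=G-(S\cup A)$ is nonempty. Maximality then forces two local closure properties for $s\in S$ and $b,b'\in B$:
\begin{itemize}
\item[(a)] $sb,bb'\in E$ implies $sb'\in E$;
\item[(b)] $sb,sb'\in E$ implies $bb'\in E$.
\end{itemize}
Otherwise $\{s,b,b'\}$ is an induced $P_2$ whose complement has a component containing $A$ together with a vertex of $S$. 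These properties make $B$ connected, with $u$ and $w$ adjacent to all of $B$. Then $S'=\{u,b,w\}$ shows that $v$ has no neighbour in $A$. A final choice $S'=\{a,u,b\}$, with $a$ a non-cut vertex of $A$ adjacent to $u$ and $b\in B$, yields a component larger than $A$. (If $A-a$ has a neighbour in $\{v,w\}$, it is $(A-a)\cup\{v,w\}$; otherwise $|A|=1$ and $\{v,w\}$ already beats it.)

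So an induced $P_2$ with connected complement always exists. No fallback is needed, and one gets $G\backslash K=H\oplus P_2$ literally. Your fallback would only give two bridges somewhere in $G\backslash K$. That suffices for the matroid application in Theorem~\ref{thm:BonusColoop+}, but it is weaker than the statement as written.
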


Using the classification of regular matroids in \cite{Seymour80} and the explicit descriptions in small ranks in \cite{DanilovGrishukhin99, Israel-PhD}, we prove the following as a consequence of these results:

\begin{main}\label{thm:BonusColoop}
    If $M$ is a simple regular matroid with $3 \leq r(M)\leq 6$, then one of the following holds.
    \begin{enumerate}
        \item[(i)] There exists a cocircuit $C^*$ such that $M\backslash C^* = U_{2,2}\oplus N$, where $N$ is a minor of $M$ of rank $r(M)-3$.
        \item[(ii)]  For all $e\in E(M)$, there exists a cocircuit $C^*$ such that $M\backslash C^* = U_{1,1}\oplus N$, where $E(U_{1,1}) = \{e\}$ and $N$ is a minor of $M$ of rank $r(M)-2$.
    \end{enumerate}
\end{main}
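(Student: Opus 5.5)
The plan is to use Seymour's decomposition theorem \cite{Seymour80}. Every regular matroid is built from graphic matroids, cographic matroids, and copies of $R_{10}$ by $1$-, $2$- and $3$-sums. We treat the pieces separately and then handle the sums, using the rank restriction $r(M)\le 6$ together with the explicit lists in \cite{DanilovGrishukhin99, Israel-PhD} to control what can occur.

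\textbf{Direct sums.} If $M$ is disconnected, write $M=M_1\oplus M_2$. Apply the statement to a component, or argue directly when a component has rank at most $2$. A cocircuit of one summand is a cocircuit of $M$, and deleting it leaves the other summand untouched. So alternative (i) for either summand gives (i) for $M$. If both summands satisfy (ii), then choosing for each summand a cocircuit that splits off one element as a coloop produces two coloops. Hence we may assume $M$ is connected.

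\textbf{Graphic and cographic pieces.} If $M=M(G)$ is graphic, we may take $G$ to be $2$-vertex-connected and simple, since $M$ is connected and simple. Minimal edge cuts of $G$ are exactly the cocircuits of $M$, and $G\backslash K=H\oplus P_2$ translates to $M\backslash C^*=U_{2,2}\oplus N$, because a pendant path on two edges contributes two coloops. So Theorem~\ref{thm:BonusColoop-Graphic}(1) gives (i) unless $G=K_d$. If $G=K_d$, Theorem~\ref{thm:BonusColoop-Graphic}(2) gives (ii).

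If $M=M^*(G)$ is cographic, the cocircuits of $M$ are the cycles of $G$, and deletion in $M$ is contraction in $G$. Simplicity of $M$ means $G$ has no bonds of size at most $2$. We can suppress degree-two vertices only when that creates parallel edges, and the cosimple, connected hypotheses make $G$ $2$-vertex-connected with minimum degree at least three. Theorem~\ref{thm:BonusColoop-Cographic} then gives a cycle whose contraction produces two loops. These loops are two coloops of $M\backslash C^*$, which is (i).

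In both cases the rank of $N$ follows from the fact that deleting a cocircuit drops the rank by one, and each coloop drops it by one more.

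\textbf{The remaining case.} Suppose $M$ is neither graphic nor cographic. By Seymour, either $M\cong R_{10}$, which has rank $5$, or $M$ has a nontrivial $2$- or $3$-sum decomposition. For $R_{10}$ we verify (i) directly: deleting a suitable cocircuit of size $4$ should leave two coloops, and its transitive automorphism group makes the check finite.

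For a $2$-sum or $3$-sum $M=M_1\oplus_k M_2$ with $r(M)\le 6$, the parts have small rank. The plan is to pick a cocircuit of one part $M_1$ that avoids the basepoint or triangle and splits off two coloops. Such a cocircuit stays a cocircuit of $M$ once it is extended across the sum appropriately, and the coloops survive.

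\textbf{Main obstacle.} The real difficulty is this last step, controlling how cocircuits and coloops behave across $3$-sums in ranks $5$ and $6$. Here I would fall back on the explicit classification of the small non-graphic, non-cographic regular matroids, such as $R_{12}$ and its relatives in \cite{Israel-PhD}. I would check each of these finitely many cases, exploiting symmetry, rather than attempt a uniform argument.
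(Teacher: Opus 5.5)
Your architecture matches the paper's: reduce to connected $M$, use Theorem~\ref{thm:BonusColoop-Graphic} for graphic $M$ and Theorem~\ref{thm:BonusColoop-Cographic} for cographic $M$, treat $R_{10}$ by hand, and settle the rest with the small-rank classification. The graphic and cographic cases are fine. However, two other steps are false as written.

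\textbf{Direct sums.} You cannot combine a coloop-splitting cocircuit from each summand. Alternative (i) asks for a \emph{single} cocircuit, and every cocircuit of $M_1\oplus M_2$ lies inside one summand, so deleting it leaves the other summand intact. In fact $M(K_3)\oplus M(K_3)$, of rank four, fails (i): every cocircuit is two edges of one triangle, and its deletion leaves only one coloop. The correct conclusion, when no summand is a coloop $U_{1,1}$ and no summand satisfies (i), is that every summand is $M(K_3)$ or of type (ii). Then $M$ satisfies (ii): for a given $e$, use the splitting cocircuit of the summand containing $e$, and the rank count still gives $r(N)=r(M)-2$.

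\textbf{$R_{10}$.} No $4$-element cocircuit works. Represent $R_{10}$ by the ten weight-three vectors of $\mathbb{F}_2^5$. Deleting a $4$-element cocircuit leaves a rank-four matroid on six elements whose only circuits are three $4$-element sets covering each element twice, so it has no coloops. You must delete a $6$-element cocircuit instead, which leaves $U_{4,4}$.

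\textbf{The remaining case.} The larger gap is the case that is neither graphic, cographic, nor $R_{10}$. This is where the content lies, and your proposal does not carry it out. Your $2$-/$3$-sum plan presupposes what has to be proved: that some part has a cocircuit avoiding the basepoint or triangle whose deletion leaves two coloops not absorbed by the sum. You give no reason such a cocircuit exists.

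Falling back on a finite check is what the paper does, but it needs the precise classification input. By Danilov--Grishukhin and Israel, such an $M$ has rank exactly six and is a restriction of $P_{12}$ or $R_{16}$. In the $P_{12}$ case the restriction must contain $e_1,\dots,e_9,f_1,f_2$ of $A_{12}$. Otherwise $M$ is graphic (each $R_{10}\backslash e$ is graphic) or fails connectivity or rank. In the $R_{16}$ case it must contain the first twelve columns of $A_{16}$.

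This is what lets a single cocircuit work for all restrictions at once. The support of $v_1+v_3$ in $A_{12}$ is a cocircuit whose deletion is $U_{5,5}$. The support of $v_1+v_3+v_6$ in $A_{16}$ is a cocircuit whose deletion is $U_{5,5}$ or $U_{3,3}\oplus M(K_3)$. Until these matroids and cocircuits are identified and checked, the statement remains unproved in rank six.
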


In rank four, this follows from results of Nienhaus, who used an analysis of the $17$ simple regular matroids of rank four (see \cite{Nienhaus-pre}). As we explain below, our proof method is via Seymour's theorem on regular matroids and, in particular, provides a second proof of Nienhaus' results. As we will see, Condition (i) of Theorem \ref{thm:BonusColoop} holds unless $M$ is the cycle matroid of a cluster graph (see Theorem \ref{thm:BonusColoop+}).

Theorem~\ref{thm:BonusColoop} and Nienhaus’ results are both refinements in small ranks of a result of \cite{KWW1} that, in matroid terminology, is equivalent to the following:

\begin{quote}
    {\it If $M$ is a simple regular matroid, then there exists a cocircuit whose deletion leaves $U_{1,1} \oplus N$ for some minor $N$ of rank $r(M) - 2$.}
\end{quote}

We call this result the {\it Coloop Splitting Theorem}. It has immediate applications to the structure of torus representations with the property that all isotropy groups are connected. This is due to the link found in \cite{KWW2} between such representations and regular matroids. That paper, along with the earlier paper \cite{KWW1}, proves structural results on torus representations to prove new obstructions to the existence of Riemannian metrics with positive curvature and isometric torus actions. We now describe an application of Theorem \ref{thm:BonusColoop} that improves a result in this area.

Using the Coloop Splitting Theorem, the paper \cite{KWW1} proves the Euler Characteristic Positivity Conjecture stated by Hopf in the 1930s for Riemannian metrics with positive sectional curvature that are invariant under a $T^5$-action. This was the first piece of evidence for this conjecture of this form where the rank of the torus did not need to grow to infinity in the manifold dimension. Later, Nienhaus’ refinement of the Coloop Splitting Theorem and other tools led to a proof that the conjecture holds more generally for $T^4$-invariant metrics as well \cite{Nienhaus-pre}.

Recently, the second author, Nienhaus, and Mouillé \cite{KennardMouilleNienhaus} used the Coloop Splitting Theorem to show a similar result where the curvature assumption is relaxed to positive second intermediate Ricci curvature but where the symmetry assumption is that the metric is $T^{10}$-invariant (see the corollary to \cite[Theorem A]{KennardMouilleNienhaus}). 
%Again, the Coloop Splitting Theorem played a key role.
Theorem \ref{thm:BonusColoop} implies that we can relax the symmetry assumption from $T^{10}$ to $T^9$. This also improves the first corollary to \cite[Theorem B]{KennardMouilleNienhaus}. 

\begin{main}
\label{thm:Ric_2}
    A $(4n)$-dimensional closed manifold admitting a $T^9$-invariant Riemannian metric with positive second intermediate Ricci curvature has positive Euler characteristic. If moreover the action has connected isotropy groups, then the manifold has vanishing odd Betti numbers.
\end{main}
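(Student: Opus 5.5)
The plan is to follow the argument of \cite{KennardMouilleNienhaus} verbatim and change only the step that used the Coloop Splitting Theorem; there we substitute Theorem~\ref{thm:BonusColoop}. Recall the structure of that proof. Let $M^{4n}$ be closed with a $T^9$-invariant metric of positive $\Ric_2$. One passes to fixed point components of subtori, which are totally geodesic, and argues by induction on dimension and rank. Positive $\Ric_2$ passes to totally geodesic submanifolds, with a controlled loss in rank. Connectedness lemmas and periodicity arguments then reduce the Euler characteristic claim to the following situation. At a fixed point $x$ of the torus, the isotropy representation of $T$ on $T_xM$ determines a collection of weights. After dividing by subtori acting trivially and restricting to a subtorus with connected isotropy, these weights form a simple regular matroid $M_x$, by \cite{KWW2}.

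The combinatorial input needed is a cocircuit $C^*$ such that $M_x\backslash C^*$ splits off coloops. Geometrically, this produces a circle or subtorus whose fixed point component $F$ at $x$ has a prescribed structure. Such $F$ is a totally geodesic submanifold of small codimension on which a torus of large rank acts, with two independent weight directions normal to $F$ inside a larger fixed component. With $U_{1,1}$, one gets a single such direction, and the rank bookkeeping in \cite{KennardMouilleNienhaus} then requires $T^{10}$.

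Under Theorem~\ref{thm:BonusColoop}, when the rank of the relevant matroid is between $3$ and $6$, we get either alternative (i) or alternative (ii).

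\begin{itemize}
\item \textbf{Alternative (i).} A $U_{2,2}$ splitting gives two coloops at once. This saves one dimension of torus in the induction: the inductive step passes from $T^{r}$ to a fixed set carrying $T^{r-3}$ with an extra transversality. That is exactly the saving of one circle needed to go from $T^{10}$ to $T^9$.
\item \textbf{Alternative (ii).} Any element $e$ can be split off as a coloop. We use this freedom to choose $e$ to be the weight adapted to the geometry, for example the weight whose fixed set has maximal dimension or satisfies the needed codimension bound. The choice of $e$ replaces the rank we gave up, so the inductive count closes again with $T^9$.
\end{itemize}

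The order of steps is as follows.
\begin{enumerate}
\item Reduce, as in \cite{KennardMouilleNienhaus}, to the case of connected isotropy at a fixed point and a simple regular matroid.
\item Check that the rank of the matroid arising in the critical case of the $T^9$ induction lies in $[3,6]$. Lower ranks are handled by the existing arguments; ranks above $6$ cannot occur, or are handled with $T^{10}$-type slack.
\item Apply Theorem~\ref{thm:BonusColoop} and treat alternatives (i) and (ii) separately, feeding each into the connectedness and periodicity machinery.
\item Conclude $\chi(M)>0$. Under connected isotropy, conclude the vanishing of odd Betti numbers by the same cohomological argument as in \cite[Theorem B]{KennardMouilleNienhaus}.
\end{enumerate}

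I expect the main obstacle to be step 2 together with the bookkeeping in step 3. We must verify precisely where the original proof loses one rank, and confirm that the matroid there has rank at most $6$, so that Theorem~\ref{thm:BonusColoop} applies. We must also confirm that in alternative (ii) the freedom to prescribe $e$ genuinely compensates for the weaker splitting. I would first reread the inequality in \cite{KennardMouilleNienhaus} that forces rank $10$, and rewrite it with the parameters coming from a $U_{2,2}$ splitting.
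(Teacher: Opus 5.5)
There is a genuine gap. Your proposal postpones exactly the content of the proof, and the mechanism you describe for saving a circle is not the one that works.

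Theorem~\ref{thm:BonusColoop} is an unweighted statement, but the isotropy weights carry multiplicities. What the argument actually needs is the configuration of Theorem~\ref{thm:Representation}: a rank-four induced restriction $N$ with disjoint cocircuits $C_1^*,C_2^*$ such that $3\mu(C_1^*)\le\mu(N\backslash C_2^*)$ and $\mu(C_1^*)\equiv\mu(N\backslash C_2^*)\bmod 2$ (Corollary~\ref{cor:MinimalModularColoops}). The factor $3$ is what makes the Periodicity Lemma give a period $k\le\frac13\dim N_2$. That is the hypothesis of the Partial Four Periodicity Theorem under $\Ric_2>0$. The parity condition gives $\dim(N_1\cap N_2)\equiv\dim F\equiv 0\bmod 4$; without it, four-periodicity and Poincar\'e duality do not kill the odd Betti numbers. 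Neither condition appears in your plan. Neither follows from a single $U_{2,2}$ splitting or a single well-chosen coloop. In fact rank five already gives the inequality (Theorem~\ref{thm:rank-5 cocircuits}) but not the parity (the $M(K_6)$ example), so for this method it is the parity requirement that fixes the rank at six.

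\textbf{Case (i).} Split twice; every rank-three simple regular matroid again satisfies (i), so you get an induced $U_{4,4}$. Take $C_1^*$ to be the lightest coloop. Take $C_2^*$ to be a coloop other than $C_1^*$ such that the remaining two coloops have equal parity.

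\textbf{Case (ii).} Here you need Theorem~\ref{thm:BonusColoop+}, not just Theorem~\ref{thm:BonusColoop}.
\begin{enumerate}
\item Split off $e_1$ of minimum weight, which leaves $M'\in\{M(K_5),M(K_3)\oplus M(K_3)\}$.
\item Split off $e_2\in E(M')$, chosen of odd weight if possible, which leaves $U_{2,2}\oplus M(K_3)$.
\item Take $C_1^*=\{e_1\}$. Take $C_2^*=\{e_2\}$ if $\mu(M(K_3))$ is even, and otherwise $C_2^*=E(M(K_3))-e''$ for an odd-weight $e''$.
\end{enumerate}

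This proves Lemma~\ref{lem:t6} with $\gT^6$ in place of $\gT^7$, and the one-circle saving is localized entirely there. The rest of the argument is unchanged:
\begin{enumerate}
\item The Isotropy Rank Lemma gives $\gT^7\subseteq\gT^9$ with a fixed-point component $F$.
\item Reduce to $F$ lying inside a $\gT^6$-fixed component $G$ with $4\mid\dim G$.
\item Apply the lemma to $G$ and push down to $F$.
\item Sum $\chi(M)=\sum\chi(F)>0$.
\end{enumerate}
So your step~2 is misframed. The rank-six matroid is the isotropy representation of a deliberately chosen $\gT^6$ at a point of $G$, not something whose rank you need to check.

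For the connected-isotropy statement, you first need $\chi(M)>0$ to produce a $\gT^9$-fixed point, and only then can you invoke the argument of \cite[Theorem B]{KennardMouilleNienhaus}.
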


Sections \ref{sec:Cographic} and \ref{sec:Graphic} are graph theoretic and contain the proofs of Theorems \ref{thm:BonusColoop-Cographic} and \ref{thm:BonusColoop-Graphic}, respectively. 
Section \ref{sec:Regular} extends these results to matroid theory, has the proof of Theorem \ref{thm:BonusColoop}, proves a corollary that feeds into the geometric application, and describes counterexamples and partial statements for smaller ranks. In Section \ref{sec:Geometry}, we explain applications to the structure of torus representations with connected isotropy groups and prove Theorem \ref{thm:Ric_2} as well as related new results in Riemannian geometry. We also explain the barrier to further reducing the symmetry assumption in Theorem \ref{thm:Ric_2}.

\subsection*{Acknowledgements} 
The second author was partially supported by NSF Grant DMS-2402129 and the Simons Foundation TSM program. The third author was partially supported by NSF Grant DMS-2402129 in Summer 2026.

\section{Loops from cycle contractions}
\label{sec:Cographic}

In this section, we prove Theorem \ref{thm:BonusColoop-Cographic}. The proof follows the idea of \cite[Theorem~4]{QiaoZhang10} with adjustments made to allow for non-simple graphs. We will adopt their aim of finding a cycle with two chords, and for this purpose we
will count loops and repeated edges whose vertices lie on a cycle as chords of that cycle.

\begin{proof}
[Proof of Theorem \ref{thm:BonusColoop-Cographic}]
We are given a $2$-vertex connected graph $G$ with minimum degree at least three, and we will show that there exists a cycle with at least two chords. If $G$ has only three vertices, then the proof follows easily. 
We may assume $|V(G)|\geq 4$.

Let $P = \{u_1, u_2,\dots, u_p\}$ be the vertex set of a maximal length path in $G$. Since $G$ has minimum degree at least three, $u_1$ must be incident to at least two edges not contained in $P$. By maximality of $P$, all neighbors of $u_1$ are in contained in $P$. Hence we have an edge $u_1 u_j$ for some $1 \leq j \leq p$ with the property that there is another edge $e$ between $u_1$ and some $u_i$ with $1 \leq i \leq j$. By taking $j$ to be maximal, we may further assume there is no edge between $u_1$ and $u_k$ with $k > j$.

By $2$-vertex connectivity,  $j \geq 3$ since otherwise $u_2$ is a cut-vertex. Moreover if $j = 3$, then, since $u_3$ is not a cut-vertex, $u_2$ is adjacent to a vertex besides $u_1$ and $u_3$. Moreover, that vertex is $u_k$ for some $k > j$ by the maximality of the length of $P$.

The cycle $\{u_1, \ldots, u_j\}$ already has one chord, namely $e$. 
If there is a second chord, we are done. 
Similarly, if $u_{j-1}$ is adjacent to $u_k$ for some $k > j$, then the cycle $\{u_1,\ldots, u_{j-1}, u_k, u_{k-1},\ldots, u_j\}$ has two chords, $e$ and $u_{j-1} u_j$. 
If neither of these cases occurs, then $u_1$ and $u_{j-1}$ are adjacent and of degree exactly three, and $j \geq 4$. Now $u_2$ is a vertex distinct from $u_1$ and $u_{j-1}$. It too has minimum degree three, so by maximality of $P$ again we have an edge of the form $u_2 u_k$ for some $k > j$. In this case, the cycle $\{u_2, \ldots, u_{j-1}, u_1, u_j, \ldots, u_k\}$ has two chords, $u_1 u_2$ and $u_{j-1} u_j$. 
\end{proof}

\section{Bridges from bond deletions}
\label{sec:Graphic}

In this section, we prove Theorem \ref{thm:BonusColoop-Graphic}. The statement has two parts, which correspond to Lemmas \ref{lem:Graphic_notKd} and \ref{lem:Graphic_Kd}. Together, these lemmas prove the theorem. We recall that a {\it bond} is a non-empty inclusion-minimal cut-set of edges. 

\begin{lemma}
\label{lem:Graphic_notKd}
    If $G$ is a simple $2$-vertex-connected graph that is not a complete graph, then there exists a bond $K$ such that $G\backslash K = H\oplus P_2$ for a subgraph $H$ of $G$.
\end{lemma}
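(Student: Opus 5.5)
The plan is to reduce the statement to finding a vertex partition. A bond is the set of edges between the two sides of a partition $V(G)=X\sqcup Y$ for which both $G[X]$ and $G[Y]$ are connected. If in addition $G[Y]$ is an induced path $u\,v\,w$ on three vertices, then deleting the bond gives $G\backslash K=G[X]\oplus P_2$. So it suffices to find three vertices $u,v,w$ with $uv,vw\in E(G)$, $uw\notin E(G)$, and $G-\{u,v,w\}$ connected. Here $X$ is nonempty because $G\neq P_2$. I would find such a triple by shrinking a larger set $Y$ one vertex at a time while keeping track of an induced $P_2$ inside it.

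\emph{Initial partition.} Because $G$ is simple and not complete, it has two nonadjacent vertices $a,b$, and $|V(G)|\geq 4$. (A $2$-vertex-connected simple graph on three vertices is $K_3$, which is complete.) Choose a vertex $x\notin\{a,b\}$. By $2$-vertex-connectivity, $G-x$ is connected. It is not complete, since it contains $a$ and $b$. A connected non-complete graph contains an induced $P_2$: take a shortest path between two nonadjacent vertices and look at its first three vertices. So we can begin with $X=\{x\}$ and $Y=V\setminus\{x\}$. We then consider all partitions $X\sqcup Y$ with $G[X]$ and $G[Y]$ connected and $G[Y]$ containing an induced $P_2$, and choose one with $|Y|$ minimal. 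The goal is to show that $|Y|=3$.

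\emph{Shrinking step.} Suppose $|Y|>3$, and fix an induced path $a\,b\,c$ in $G[Y]$. We look for a vertex $y\in Y\setminus\{a,b,c\}$ such that $G[Y-y]$ is connected and $y$ has a neighbour in $X$. Moving $y$ to $X$ then keeps both sides connected and keeps the induced path $a\,b\,c$ in $Y$, which contradicts minimality.

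To find $y$, take a spanning tree $T$ of $G[Y]$ that contains the edges $ab$ and $bc$, and look at the leaves of $T$ outside $\{a,b,c\}$. Deleting such a leaf never disconnects $G[Y]$.

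\emph{Main obstacle.} The difficulty is guaranteeing that some removable vertex is adjacent to $X$. Two facts help.
\begin{enumerate}
\item $2$-vertex-connectivity forces $X$ to send edges to at least two vertices of $Y$, unless $|Y|$ is small enough to be handled directly.
\item We are free to change the tree $T$, and even the induced path $a\,b\,c$.
\end{enumerate}
First I would try the following. Let $y_0\in Y$ be a neighbour of $X$ that is farthest in $G[Y]$ from $\{a,b,c\}$. Build $T$ as a breadth-first-search tree from $\{a,b,c\}$. If some descendant of $y_0$ in $T$ is a leaf, it is removable, but it may not be adjacent to $X$. In that case, instead of moving a single vertex, move a whole branch of $T$ that hangs below a vertex adjacent to $X$. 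That branch is connected and attaches to $X$, and removing it leaves $G[Y]$ connected because $T$ restricted to the rest is still a tree. Moving a branch of several vertices decreases $|Y|$ just as well, so it still contradicts minimality.

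The remaining case is that every neighbour of $X$ in $Y$ lies in $\{a,b,c\}$ and every leaf of $T$ lies in $\{a,b,c\}$. Then $T$ is essentially a path through $a,b,c$. Here I would re-choose the induced $P_2$ to be the three vertices at one end of that path. Non-completeness of $G[Y]$ near that end has to be checked; if it fails, use chords of the path to find another induced $P_2$ that avoids a removable vertex. Carrying out this last re-choice cleanly is the step I expect to require the most care.
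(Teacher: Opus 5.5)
Your reduction is correct: an induced $P_2$ $S$ with $G\backslash S$ connected gives the bond. The initial partition also exists. The branch move works whenever some vertex of $Y\setminus\{a,b,c\}$ has a neighbour in $X$: root a spanning tree of $G[Y]$ at $b$ through $ab,bc$ and move the subtree below that vertex.

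The gap is the remaining case $N(X)\cap Y\subseteq\{a,b,c\}$. Your description of this case does not follow from the case analysis. Nothing forces the leaves of $T$ into $\{a,b,c\}$, since there may be removable leaves outside it that have no neighbour in $X$. So $T$ need not be a path.

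More seriously, this case cannot be closed by any move that only shrinks $Y$, however the induced $P_2$ is re-chosen. Let $G$ have vertex set $\{x,z_1,z_2,w_1,w_2\}$ and all edges except $z_1z_2$, $xw_1$, $xw_2$. This graph is simple, $2$-vertex-connected and not complete. The partition $X=\{x\}$, $Y=\{z_1,z_2,w_1,w_2\}$ is valid; it is the one your initial step gives from the pair $z_1,z_2$. The only $3$-subsets of $Y$ inducing a $P_2$ are $\{z_1,w_1,z_2\}$ and $\{z_1,w_2,z_2\}$; the other two are triangles. Their complements $\{x,w_2\}$ and $\{x,w_1\}$ are disconnected, so no valid partition has $Y'\subsetneq Y$. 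The triple that works is $\{x,z_1,w_1\}$, with complement $\{z_2,w_2\}$, and it uses the vertex $x$ of $X$. So showing that $|Y|>3$ contradicts minimality needs competitors that pull a vertex out of $X$, and your plan has no such move.

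That exchange is the missing idea, and it is the substance of the proof. In your stuck case, $X$ is connected with $N(X)\subseteq\{a,b,c\}$, so it is a component of $G\backslash\{a,b,c\}$. Your global minimality makes it a component of maximal order over all induced $P_2$'s. This is exactly the extremal configuration the paper starts from, with $S=\{u,v,w\}$ and $A=X$.

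The paper then uses maximality of $|V(A)|$, through competitors that are themselves induced $P_2$'s, to show the following. If $G\backslash S$ had another part $B$, then $B$ would be connected, every vertex of $B$ would be adjacent to $u$ and $w$, and $v$ would have no neighbour in $A$ (Claims 1--4). It then picks a vertex $a$ of $A$ adjacent to $S$ whose removal keeps $A$ connected (Claim 5, via a leaf block of $A$). Finally it uses $S'=\{a,u,b\}$ with $b\in V(B)$. The complement of $S'$ has a component containing $V(A)-a$ together with $v$ and $w$, which is larger than $A$. Your proposal would need to supply this step: choosing which vertex of $X$ to bring over so that the rest of $X$ still attaches to the new complement.
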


\begin{proof}
    Set $E = E(G)$. Since $G$ is $2$-vertex-connected and not  $K_3$, it has at least four vertices. Since moreover $G$ is not a complete graph, there is a subset $S = \{u, v, w\}$ of three vertices such that $uv \in E$, $vw \in E$, and $uw \not\in E$. Among such choices of $S$ and choices of connected component $A \subseteq G \backslash S$, we fix $S$ and $A$ so that $|V(A)|$ is maximal.
    
    If $A$ is the only component of $G\backslash S$, then the set of edges $K$ between vertices of $A$ and $S$ is a bond. Further, $G\backslash K = A \oplus G[S] = H\oplus P_2$, so the theorem holds in this case. To finish the proof, we assume that $G \backslash S$ is disconnected and use maximality of $|V(A)|$ to derive a contradiction. 
    
    Set $B = G \backslash (S \cup A)$. By 2-vertex connectivity, at least two of $\{u,v,w\}$ are adjacent to some vertices in each component of $G\backslash S$. If $v$ is the only vertex adjacent to vertices in both $A$ and $B$, then $G-v$ is disconnected. Without loss of generality, $u$ is adjacent to vertices in both $A$ and $B$ and $\{v, w\}$ is adjacent to vertices in both $A$ and $B$.

    % We use maximality of $A$ to prove the following three claims.%:

    {\bf Claim 1}: If $s b \in E$ and $b b' \in E$ for some $s \in S$ and some $b, b' \in V(B)$, then $s b' \in E$.

    If $s b' \not\in E$, then the set $S' = \{s, b, b'\}$ induces a $P_2$. In addition, $G \backslash S'$ has a component that contains both $A$ and $\{v, w\}$ if $s = u$ and both $A$ and $u$ if $s \in \{v, w\}$. This contradicts maximality.

    {\bf Claim 2}: If $s b \in E$ and $s b' \in E$ for some $s \in S$ and some $b, b' \in V(B)$, then $b b' \in E$.

    The proof is similar to Claim 1 using the set $S ' = \{s, b, b'\}$ to derive a contradiction.

    We now use Claims 1 and 2 together with $2$-vertex connectivity and the maximality of $A$ to prove the following three claims.

    \textbf{Claim 3}: $B$ contains a vertex in $N(u)\cap N(w)$.

   Assume first that $w$ has no neighbors in $B$. By 2-vertex connectivity, $v$ is adjacent to some $b\in V(B)$, and thus the set $S'=\{b,v,w\}$ contradicts the choice of $S$. 
   By $2$-vertex connectivity, each component of $B$ is adjacent to at least two vertices in $S$. Therefore, any pair of components are adjacent to a common vertex $s \in S$, and by Claim 2, $B$ is connected.
   By Claim 1, $u$ and $w$ are adjacent to every vertex in $B$ and thus have a common neighbor in $B$.
    
    {\bf Claim 4}: There is no edge connecting $A$ to $v$.
    
    If there is an edge connecting $A$ to $v$, then the set $S' = \{u, b, w\}$ for any vertex $b$ in $B$ yields an induced $P_2$ by Claim 3 with the property that $G \backslash S'$ has a component containing both $A$ and $v$, which contradicts maximality.

    {\bf Claim 5}: There is a vertex $a$ in $A$ such that $A \backslash a$ is connected and $a \in N(u) \cup N(w)$.

    If $A \backslash a$ is connected for every vertex of $A$, then the claim clearly holds. We may assume $A$ has at least one cut-vertex, and we look at the block tree decomposition of $A$. We choose a cut-vertex $a$ and a block $A' \subseteq A$ that contains the cut-vertex $a$ but that has no other cut-vertices of $A$. This is accomplished by choosing a block that corresponds to a leaf in the block tree. Since $a$ is not a cut-vertex of $G$, there exists $a' \in V(A') - a$ connected to $u$ or to $w$. This proves the claim.

    We now finish the proof. After possibly relabeling $u$ and $w$, we may assume $a \in V(A)$ has the property that $A \backslash a$ is connected and $u a \in E$. If $A\backslash a$ is adjacent to $w$ in $G$, then $S' = \{a, u, b\}$ for any choice of $b \in V(B)$ has the property that $G[S'] = P_2$ and that $G \backslash S'$ has a component containing both $V(A) - a$ and $\{v, w\}$, which contradicts the maximality of $A$. Therefore we may assume that $V(A) \cap N(w) \subseteq \{a\}$. Combining Claim 4 and the $2$-vertex connectivity of $G$ implies that $A \cap N(w)$ is non-empty, so
        \[V(A) \cap N(w) = \{a\}.\]
    We now have $a w \in E$, and we can reverse the roles of $u$ and $w$ in this argument to conclude similarly that
        \[V(A) \cap N(u) = \{a\}.\]
    Hence $a$ is the only point in $A$ that connects to $S$. Since $G$ is $2$-vertex-connected, $|V(A)| = 1$.
    Then for any choice of $b\in B \cap N(u)$, which exists by Claim 3, the considering the set $S' = \{a,u,b\}$ again leads to a contradiction of the maximality of $V(A)$. 
    %Otherwise, the set $\{u,a,w\}$ contradicts the choice of $S$.
    We conclude that $A$ is the only component of $G\backslash S$.
    % This leads to a contradiction as in earlier claims, using the set $S' = \{u,b,w\}$ since, by a similar argument to the one in Claim 3, $v$ is adjacent to some vertex in $B$.
\end{proof}

\begin{lemma}
\label{lem:Graphic_Kd}
    If $G=K_d$ with $d \geq 3$, then for all $e\in E(G)$, there exists a bond $K$ such that $G\backslash K = H\oplus P_1$ for a subgraph $H$ of $G$ isomorphic to $K_{d-2}$, where $E(P_1) = \{e\}$.
\end{lemma}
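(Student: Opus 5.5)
Let $e = xy$ be the given edge of $K_d$, and let $W = V(G) \setminus \{x,y\}$, so that $|W| = d-2 \geq 1$. The plan is to take $K$ to be the set of all edges joining $\{x,y\}$ to $W$, that is,
\[ K = \{ xw, \ yw : w \in W \}, \]
and then verify two things: that $K$ is a bond, and that deleting it leaves exactly the claimed graph.

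The deletion is immediate. The graph $G \backslash K$ consists of the edge $e$ on $\{x,y\}$ together with the induced subgraph $G[W] \cong K_{d-2}$, and no edges join these two parts. Hence $G \backslash K = H \oplus P_1$ with $H = G[W] \cong K_{d-2}$ and $E(P_1) = \{e\}$.

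It remains to show that $K$ is a bond, meaning a non-empty, inclusion-minimal cut set. We use the standard fact that the edge cut $\delta(X)$ determined by a vertex partition $(X, V \setminus X)$ is a bond precisely when both $G[X]$ and $G[V \setminus X]$ are connected and non-empty. Apply this with $X = \{x,y\}$.
\begin{itemize}
\item $K$ is non-empty because $|W| \geq 1$.
\item $G[X]$ is connected, since it is the single edge $e$.
\item $G[W]$ is a complete graph on at least one vertex, so it is connected.
\end{itemize}

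If we prefer to argue minimality directly rather than cite the fact, the argument is short. Removing all of $K$ disconnects $G$. Now suppose some edge $f \in K$ is kept while the rest of $K$ is deleted. The edge $f$ joins the connected piece $\{x,y\}$ to the connected piece $W$, so the resulting graph is connected. Therefore no proper subset of $K$ is a cut set.

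The only step that needs any care is this minimality of $K$, but it reduces to the connectivity of the two sides, which is trivial for a complete graph. In contrast to Lemma~\ref{lem:Graphic_notKd}, there is no substantial obstacle here. The case $d = 3$ fits the same argument: $W$ is a single vertex, $K$ consists of two edges, and $H = K_1$ has no edges.
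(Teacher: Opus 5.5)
Your proof is correct and follows the same route as the paper: it also takes $K$ to be the set of edges joining the endpoints of $e$ to the other $d-2$ vertices and observes that $G\backslash K = K_{d-2}\oplus P_1$. Your check that $K$ is a bond, via connectivity of both sides of the cut, supplies the one step the paper simply asserts.
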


\begin{proof}
    For any edge $e=uv$, the set of edges $K$ between $\{u, v\}$ and $G\backslash \{u,v\}$ forms a bond. Moreover, $G\backslash K = K_{d-2} \oplus P_1$ where $E(P_1) = \{e\}$.
\end{proof}

\section{Coloops from cocircuit deletions
}\label{sec:Regular}

In this section, we prove Theorem \ref{thm:BonusColoop} and Corollary \ref{cor:MinimalModularColoops} for matroids of rank at most six. The latter result is all we need later for the geometric applications. We also give an example showing that the conclusion of Corollary \ref{cor:MinimalModularColoops} no longer holds if we relax the rank condition to six to five but that results interpolating between our result in rank six and Nienhaus's in rank four do. We discuss how these barriers turn into roadblocks in the geometric applications in the next section.

Theorem \ref{thm:BonusColoop} is implied by the following, which includes a more explicit statement in Case (ii) that we use later.

\begin{theorem}
\label{thm:BonusColoop+}
    If $M$ is a simple regular matroid with $3 \leq r(M) \leq 6$, then one of the following holds:
    \begin{enumerate}
        \item[(i)] There is a cocircuit $C^*$ so that $M \backslash C^* = U_{2,2} \oplus N$ for some minor $N$ of $M$.
         \item[(ii)] $M$ is graphic on 
        $K_5$, $K_3 \oplus K_3$, 
        $K_6$, 
        $K_7$, $K_5 \oplus K_3$, or $K_3 \oplus K_3 \oplus K_3$, and for any $e \in E(M)$, there exists a cocircuit $C^*$ so that $M \backslash C^* = U_{1,1} \oplus N$ where $E(U_{1,1}) = \{e\}$ and $N$ is 
        a graphic matroid on $K_3$, $K_4$, $K_5$ or $K_3\oplus K_3$.
    \end{enumerate}
\end{theorem}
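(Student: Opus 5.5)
We plan to split $M$ according to Seymour's decomposition theorem for regular matroids: every regular matroid is built by $1$-, $2$- and $3$-sums from graphic matroids, cographic matroids, and copies of $R_{10}$. Since $r(M)\le 6$, the small-rank classifications of \cite{DanilovGrishukhin99, Israel-PhD} reduce the possibilities. We first treat the direct-sum case. If $M = M_1 \oplus M_2$, then a cocircuit of $M$ is a cocircuit of one summand. It therefore suffices to find (i) in some summand, or else to obtain (ii) in every summand. A summand of rank at most $2$ is $U_{1,1}$, $U_{2,2}$, or $U_{2,3}=M(K_3)$ after simplification. Deleting the whole cocircuit of a $U_{1,1}$ summand, or using $M(K_3)$, supplies coloops. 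This explains why $K_3\oplus K_3$, $K_5\oplus K_3$ and $K_3\oplus K_3\oplus K_3$ appear in (ii). Each such sum is built only from complete graphs, all of which individually fall under (ii). Theorem \ref{thm:BonusColoop-Graphic}(\ref{thm:BonusColoop-Graphic_Kd}), or Lemma \ref{lem:Graphic_Kd}, then gives, for each $e$, a bond isolating $e$ with $N$ a disjoint union of smaller complete graphs. The rank bound $\le 6$ limits these to the listed graphs; for example, $K_7$ leaves $K_5$, and $K_6$ leaves $K_4$.

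For connected $M$ we argue by type. If $M$ is graphic, $M=M(G)$ with $G$ $2$-vertex-connected (using a Whitney-type reduction), simple, and with $|V(G)| = r(M)+1 \le 7$. If $G\ne K_d$, Lemma \ref{lem:Graphic_notKd} gives a bond $K$ with $G\backslash K = H \oplus P_2$. The two edges of $P_2$ are then coloops, giving (i) with $N=M(H)$. If $G=K_d$, Lemma \ref{lem:Graphic_Kd} gives (ii). We must still check that $K_4$ (rank $3$) satisfies (i), since it is not listed: deleting the star at a vertex of $K_4$ leaves $K_3$, which has no coloops, so the bond must be chosen differently. Note that $K_4$ is $2$-connected and cographic with minimum degree $3$. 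If $M$ is cographic, $M=M^*(G)$ with $G$ $2$-vertex-connected, and simplicity of $M$ means $G$ has minimum degree at least $3$. Theorem \ref{thm:BonusColoop-Cographic} gives a cycle $C$ whose contraction produces two loops. Dually, $C$ is a cocircuit of $M$, and $M\backslash C = (G/C)^*$ has two coloops, so (i) holds. This also covers $K_4$ and $K_5$, the latter being cographic only if planar, which it is not, so there (ii) comes from the graphic branch. $R_{10}$ has rank $5$. Here we verify (i) directly using its transitivity: each cocircuit has size $4$ or $6$, and deleting a suitable $4$-element cocircuit leaves rank $4$ with coloops. A short check of the standard representation should exhibit a $U_{2,2}$ summand.

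The main obstacle is the case of connected $M$ that is neither graphic nor cographic nor $R_{10}$, i.e., a proper $2$- or $3$-sum. In rank $\le 6$, the classification of Danilov–Grishukhin and Israel lists these explicitly. Examples include $3$-sums of $M(K_5)$-like pieces with $M^*(K_{3,3})$, and $R_{12}$, which has rank $6$. We would first try a general argument. Take a $3$-sum $M=M_1\oplus_3 M_2$ along a triangle $T$, where $M_2$ is cographic, or graphic but not complete. Then find a cocircuit of $M_2$ avoiding $T$, or meeting it in a controlled way, whose deletion yields two coloops outside $T$. This cocircuit extends to a cocircuit of $M$ by adjoining the appropriate coline of $M_1$, and the two coloops survive. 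The required cycle or bond can be obtained by rerunning the maximal-path and maximal-component arguments of Sections \ref{sec:Cographic} and \ref{sec:Graphic} relative to the triangle. If this general argument fails, the fallback is to check the finitely many exceptional non-graphic, non-cographic regular matroids of rank $\le 6$ by hand, exhibiting an explicit cocircuit in each. That finite verification is where we expect most of the labor.
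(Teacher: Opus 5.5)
Your direct-sum reduction, the graphic case via Lemmas \ref{lem:Graphic_notKd} and \ref{lem:Graphic_Kd}, the cographic case via Theorem \ref{thm:BonusColoop-Cographic}, and handling $M(K_4)$ through the cographic branch all match the paper. The genuine gap is the remaining case: $M$ connected but not graphic, cographic, or $R_{10}$. You flag it as the main obstacle but do not resolve it.

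\begin{itemize}
\item Your $3$-sum argument needs a cocircuit of $M_2$ that avoids (or is controlled on) the triangle $T$ and leaves two coloops. Theorems \ref{thm:BonusColoop-Cographic} and \ref{thm:BonusColoop-Graphic} give no control relative to a prescribed triangle. For example, take $M_2=M^*(K_{3,3})$ with $T$ a vertex star: every cycle of $K_{3,3}$ avoiding $T$ is a chordless $4$-cycle, so no such cocircuit exists.
\item The argument does not cover $2$-sums with an $R_{10}$ piece, such as $M(K_3)\oplus_2 R_{10}$, since your hypothesis only allows $M_2$ cographic or non-complete graphic.
\item A ``finite check by hand'' is not an argument until you know what the list is.
\end{itemize}

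The missing input is the precise form of the Danilov--Grishukhin/Israel classification that the paper uses. Such an $M$ has rank exactly six and is a restriction of $P_{12}=P_{K_2}(M(K_3),R_{10})$ or of $R_{16}$. Non-graphicity then forces specific elements to be present:
\begin{itemize}
\item for $P_{12}$, all of $e_1,\dots,e_9$, because $R_{10}\backslash e$ is graphic, and $f_1,f_2$ by connectivity and rank;
\item for $R_{16}$, the first twelve columns of $A_{16}$.
\end{itemize}
After that, one fixed row-space vector does the job uniformly over all admissible restrictions. The vector is $v_1+v_3$ for $P_{12}$ and $v_1+v_3+v_6$ for $R_{16}$. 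Its support is a cocircuit whose deletion is $U_{5,5}$ or $U_{3,3}\oplus M(K_3)$.

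Your $R_{10}$ step is also wrong as written, because no $4$-element cocircuit leaves a coloop. Suppose $x$ were a coloop of $R_{10}\backslash C^*$ with $|C^*|=4$. Then some cocircuit $D$ satisfies $x\in D\subseteq C^*\cup\{x\}$, which forces $|D|=4$. Since $R_{10}$ is binary, $D\triangle C^*$ is a nonempty disjoint union of cocircuits, yet it has only two elements. This contradicts the fact that all cocircuits of $R_{10}$ have size $4$ or $6$. You must use a $6$-element cocircuit instead; deleting any of these leaves $U_{4,4}$, which is what the paper does.

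Finally, your direct-sum dichotomy (``(i) in some summand, or (ii) in every summand'') misses mixed cases such as $U_{1,1}\oplus M(K_5)$. There the existing coloop has to be combined with a single-coloop split of another summand, which is the paper's second sub-case.
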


While not required for our applications, our proof shows Cases (i) and (ii) are disjoint.

\begin{proof}
We begin by reducing to the connected case. To do this, assume that 
$M = M_1 \oplus \dots \oplus M_n$ for some $n \geq 2$ where each $M_i$ is a connected, simple, regular matroid of rank at least one. 

\begin{enumerate}
\item
If $M_1 \neq M(K_d)$ for any $d$ or if $M_1 = M(K_4)$, then
the proof in the connected case gives a cocircuit $C^*$ of $M_1$ such that $M_1\backslash C^* = U_{2,2}\oplus N_1$ where $r(N_1) = r(M_1) - 3$. Further, $C^*$ is also a cocircuit of $M$ and
    \[M\backslash C^* = (U_{2,2}\oplus N_1) \oplus M_2 \oplus\ldots \oplus M_n
    = U_{2,2} \oplus N\]
for some minor $N$ of $M$ with  $r(N) = r(M) - 3$. Hence Conclusion (i) holds.

\item
If $M_1 = M(K_2)$ and $M_2$ has rank at least two, then the proof in the connected case implies that $M_2$ has a cocircuit whose deletion leaves a coloop. This cocircuit is also a cocircuit of $M$, and its deletion leaves two coloops. Hence Conclusion (i) holds. 

\item 
If $M_1 = M_2 = M(K_2)$, then $n \geq 3$ since $r(M) \geq 3$. Deleting any cocircuit from $M_3$ leaves two coloops in $M$, and Conclusion (i) holds.

\end{enumerate}

If none of these cases occurs, then up to relabeling we may assume that, for all $i$, $M_i = M(K_{d_i})$ with $d_i \not\in \{1, 2, 4\}$, where we note that $d_i \neq 1$ since each $M_i$ has rank at least one. It now follows from Theorem \ref{thm:BonusColoop-Graphic}.\eqref{thm:BonusColoop-Graphic_Kd} that for any $i$ and for any $e \in E(M_i)$, there is a cocircuit whose deletion leaves $e$ as a coloop together with $M(K_{d_i-2})$. Hence Conclusion (ii) holds. This completes the proof in the disconnected case assuming the connected case, so we may assume that $M$ is connected.

Suppose first that $M$ is cographic. Since duality preserves connectivity, $M^*$ is connected. Thus there is a 2-vertex-connected graph $G$ such that $M = M^*(G)$. Since $M$ is simple, it can have no circuits of size one or two. This implies $G$ has minimum degree at least three. By Theorem \ref{thm:BonusColoop-Cographic}, there is a cycle $C$ in $G$ with two chords. Further, the cycles of $G$ are cocircuits of $M^*(G)$, and $M^*(G) \backslash C = M^*(G/C)$ (see \cite[Theorem 3.1.1]{Oxley-book}). Therefore, the chords of $C$ in $G$ are loops in $G/C$, and $M^*(G/C)$ has at least two coloops, satisfying Conclusion (i).

Now suppose $M$ is graphic. Since $M$ is connected and simple, there is a simple 2-vertex-connected graph $G$ such that $M = M(G)$. 

If $G$ is not a complete graph, then Theorem \ref{thm:BonusColoop-Graphic}.\eqref{thm:BonusColoop-Graphic_notKd} implies that there is a bond $K$ such that $G\backslash K = H\oplus P_2$. Note that $K$ is a cocircuit in $M(G)$ and that 
    \[M(G)\backslash K = M(G\backslash K) = M(P_2) \oplus M(H) = U_{2,2} \oplus N,\]
where $N = M(H)$ is a minor of rank $r(M) - 3$, so Conclusion (i) holds in this case. 
If instead $G=K_d$ and if $d \not\in \{1, 2, 4\}$, then Theorem \ref{thm:BonusColoop-Graphic}.\eqref{thm:BonusColoop-Graphic_Kd} implies that, for any $e \in E(G)$, there exists a bond $K$ such that $G\backslash K = K_{d-2} \oplus P_1$ where $E(P_1) = \{e\}$. As in the previous case, $K$ is the cocircuit we seek, and Conclusion (ii) holds. 
If neither of these cases occurs, then $M = M(K_4)$. Since $K_4$ is self-dual, that is $M\cong M(K_4^*)$, Conclusion (i) follows by the cographic case.

Suppose third that $M\cong R_{10}$. The deletion of any one of the $15$ cocircuits of order six yields $U_{4,4}$. In particular, Conclusion (i) holds.

Finally suppose that $M$ is connected but not graphic, cographic, or $R_{10}$. By \cite{DanilovGrishukhin99,Israel-PhD} (see also \cite{DouthittIsraelKennard26}), $M$ has rank six and is a restriction of matroids we denote by $P_{12}$ or $R_{16}$, represented by the matrices $A_{12}$ and $A_{16}$ given in (\ref{equ:P12 representation}) and (\ref{equ:R16 representation}), respectively. We consider these in separate cases.
    
Suppose that $M = P_{12}|E$ for some $E\subseteq E(P_{12})$ with the following $\mathbb{Z}_2$ representation.%, where $P_{12}$ is a matroid represented by the following matrix over $\mathbb{Z}_2$.
    \begin{equation}\label{equ:P12 representation}
    A_{12} = \begin{blockarray}{ccccccccccccc}
    & f_1 & f_2 & p & e_1 & e_2 & e_3 & e_4 & e_5 & e_6 & e_7 & e_8 & e_9 \\
    \begin{block}{c[cccccccccccc]}
        v_0 & 1 & 1 & 0 & 0 & 0 & 0 & 0 & 0 & 0 & 0 & 0 & 0 \\
        v_1 & 1 & 0 & 1 & 0 & 0 & 0 & 0 & 1 & 1 & 0 & 0 & 1 \\
        v_2 & 0 & 0 & 0 & 1 & 0 & 0 & 0 & 1 & 1 & 1 & 0 & 0 \\
        v_3 & 0 & 0 & 0 & 0 & 1 & 0 & 0 & 0 & 1 & 1 & 1 & 0 \\
        v_4 & 0 & 0 & 0 & 0 & 0 & 1 & 0 & 0 & 0 & 1 & 1 & 1 \\
        v_5 & 0 & 0 & 0 & 0 & 0 & 0 & 1 & 1 & 0 & 0 & 1 & 1 \\
    \end{block}\end{blockarray}
    \end{equation}
    Recall $P_{12} = P_{K_2}(M(K_3), R_{10})$, and, for each element $e$ in $E(R_{10})$, the matroid $R_{10}\backslash e$ is graphic. Thus, for any $1\leq i \leq 9$, if the element $e_i$ is not present in $M$, then $M$ is isomorphic to a restriction of $P_{K_2}(M(K_3), R_{10}\backslash e)$, which is graphic. Hence $M$ is also graphic, and the result follows. Thus $\{e_1,e_2,\dots,e_9\} \subseteq E$. Moreover $\{f_1,f_2\}\subseteq E$ because $M$ is connected and has rank six. 
    Let $C^*$ be the set corresponding to the support of $v_1+v_3$. Since $C^*$ is the support of a vector in the row space, $C^*$ is a cocircuit if and only if $M\backslash C^*$ has rank 5. The matroid $M\backslash C^*$ is isomorphic to $U_{5,5}$, having ground set $\{f_2,e_1,e_3,e_4,e_6\}$. Thus $C^*$ is a cocircuit of $M$, satisfying Conclusion~(i).

    Now suppose $M = R_{16}|E$ for some $E\subseteq E(R_{16})$ with the following $\mathbb{Z}_2$ representation.%. The matroid $R_{16}$ is represented over $\mathbb{Z}_2$ by the matrix $A_{16}$ below. 
    \begin{equation}\label{equ:R16 representation}
    A_{16} = \begin{blockarray}{ccccccccccccccccc}
    & e_{1} & e_2 & e_3 & e_4 & e_5 & e_6 & f_1 & f_2 & f_3 & f_{4} & f_{5} & f_{6} & f_{7} & f_{8} & f_{9} & f_{10} \\
    \begin{block}{c[cccccccccccccccc]}
        v_1 & 1 & 0 & 0 & 0 & 0 & 0 & 1 & 1 & 1 & 0 & 0 & 0 & 1 & 1 & 0 & 0 \\
        v_2 & 0 & 1 & 0 & 0 & 0 & 0 & 1 & 1 & 0 & 1 & 0 & 0 & 1 & 1 & 0 & 0 \\
        v_3 & 0 & 0 & 1 & 0 & 0 & 0 & 1 & 0 & 0 & 0 & 1 & 0 & 0 & 0 & 0 & 1 \\
        v_4 & 0 & 0 & 0 & 1 & 0 & 0 & 0 & 1 & 0 & 0 & 0 & 1 & 0 & 0 & 0 & 1 \\
        v_5 & 0 & 0 & 0 & 0 & 1 & 0 & 0 & 0 & 1 & 0 & 1 & 1 & 0 & 1 & 1 & 0 \\
        v_6 & 0 & 0 & 0 & 0 & 0 & 1 & 0 & 0 & 0 & 1 & 1 & 1 & 0 & 1 & 1 & 0 \\
    \end{block}\end{blockarray}
    \end{equation}
    Since $M$ is not graphic or cographic, we know from \cite{DanilovGrishukhin99,Israel-PhD,DouthittIsraelKennard26} that $E$ contains the first $12$ vectors and some subset of the last four. 
    We consider the subset $C^*$ given by the support of the row space vector $v_1 + v_3 + v_6$. We have that $E(M \backslash C^*)$ equals $\{e_2, e_4, e_5, f_1, f_{5}\}$ together possibly with $f_{8}$. Note that $M \backslash C^*$ has rank five, so $C^*$ is a cocircuit. Moreover, at least $e_2$, $e_4$, and $e_5$ are coloops, so $M \backslash C^*$ is either $U_{5,5}$ or $U_{3,3} \oplus M(K_3)$. Hence $M$ satisfies Conclusion (i).
\end{proof}

For the purposes of our application in the next section, we explore applications of Theorem \ref{thm:BonusColoop} for the case of weighted matroids.

First, it is clear from Theorem \ref{thm:BonusColoop} that, for a simple regular weighted matroid $M$ of rank at most six and weight function $\mu \colon E(M) \to \mathbb R$, there exists a cocircuit $C^*$ whose deletion either leaves two coloops or whose deletion leaves a coloop that has minimum weight among elements that remain. In rank four, this is how Nienhaus showed the existence of a cocircuit whose deletion leaves a coloop of minimum weight among elements that remain. In rank six, we can apply Theorem \ref{thm:BonusColoop} again to the minor of rank $r(M) - 3$ in the first case, if needed, and conclude the following: {\it A simple regular weighted matroid of rank six has a pair of cocircuits whose deletion leaves a coloop of minimum weight}. In general, one might ask how many coloops one needs to delete to guarantee such a {\it locally minimal coloop splitting}. 

For our purposes, we go in a different direction and focus on integer-valued weight functions. The following is the precise, and the only, statement from this section that we use in the next section.

\begin{corollary}\label{cor:MinimalModularColoops}
    Let $M$ be a simple regular matroid of rank six and $\mu\colon E(M) \to \Z_+$ be a positive, integer-valued weight function. There exists an induced restriction $N$ of $M$ with rank four and disjoint cocircuits $C_1^*$ and  $C_2^*$ of $N$ such that the following hold:
    \begin{enumerate}
        \item[(i)] $3\mu(C_1^*) \leq \mu(N\backslash C_2^*)$ and
        \item[(ii)] $\mu(C_1^*) \equiv \mu(N\backslash C_2^*) \bmod 2$.
    \end{enumerate}
\end{corollary}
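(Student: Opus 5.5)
The plan is to apply Theorem \ref{thm:BonusColoop+} twice, once to $M$ and once to the rank-four pieces this produces, so that we land on a rank-four induced restriction $N$ with many coloops after deleting one cocircuit. Then we choose $C_1^*$ and $C_2^*$ by a parity/averaging argument on the weights.

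\textbf{Step 1: finding $N$.} Apply Theorem \ref{thm:BonusColoop+} to $M$. Since restricting $M$ to the complement of a cocircuit is restricting to a hyperplane, deleting one cocircuit of $M$ and then one cocircuit of the resulting rank-five matroid gives an induced restriction $N$ of rank four. I would choose these two deletions so that $N$ again has the Case (i) structure. That is, I want a cocircuit $D^*$ of $N$ with
\[N\backslash D^*=U_{2,2}\oplus N'.\]
Here $N'$ has rank one, so by simplicity $N\backslash D^*=U_{3,3}$, with elements $x,y,z$ say.

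In Case (ii) the possibilities are $K_5,K_3\oplus K_3,\dots,K_3\oplus K_3\oplus K_3$, and only $K_7$, $K_5\oplus K_3$ and $K_3\oplus K_3\oplus K_3$ have rank six. For these I would take $N$ directly to be a rank-four restriction on an induced subgraph, for example $K_5$ or $K_3\oplus K_3$, and exhibit the required cocircuits by hand. I expect this to be a short explicit check using bonds in cliques (Lemma \ref{lem:Graphic_Kd}).

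\textbf{Step 2: the weight inequalities.} In the generic situation the simplest candidates are a single coloop $C_1^*=\{x\}$ and a complementary cocircuit $C_2^*$ such that $N\backslash C_2^*$ contains $x$ together with elements of controlled weight. The parity condition (ii) asks that the total weight of the elements of $N\backslash C_2^*$ other than those in $C_1^*$ be even. The pigeonhole trick is as follows: among three integers $\mu(x),\mu(y),\mu(z)$, two share a parity, so one can make $\mu(y)+\mu(z)$ even by relabeling.

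The inequality (i), $3\mu(C_1^*)\le\mu(N\backslash C_2^*)$, is what forces us to use rank six rather than rank four. Having several disjoint cocircuit choices lets us pick the one minimizing $\mu(C_1^*)$. I would compare all cocircuits arising from the Case (i) splittings of $N$ and of $M$, and use that a minimum of three terms is at most a third of their sum.

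\textbf{Main obstacle.} The hardest step is reconciling the two conditions simultaneously. The parity choice fixes which element plays the role of $C_1^*$, while the inequality wants $C_1^*$ to be the lightest. I also have to ensure that $C_1^*$ is genuinely a cocircuit of $N$ disjoint from $C_2^*$, and not just a coloop of $N\backslash C_2^*$. My first attempt would be to use the extra room from rank six to produce three pairwise disjoint candidate cocircuits of equal combinatorial type within one rank-four $N$. I would then choose the lightest among those with compatible parity, falling back to unions of two candidates (whose weight parity is automatically adjustable) when the parities disagree. Failing that, I would analyze the small list of structures $U_{2,2}\oplus N'$ case by case.
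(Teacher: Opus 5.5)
Your proposal leaves the decisive step open, and the target you set in Step~1 is too weak to close it.

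\textbf{Case (i).} Knowing only that the rank-four $N$ has a cocircuit $D^*$ with $N\backslash D^*=U_{3,3}$ does not suffice. One reason is that a coloop of $N\backslash D^*$ need not be a cocircuit of $N$. For example, take $N=U_{2,2}\oplus M(K_3)$ with weight $100$ on the two coloops and weight $1$ on the triangle. It has this structure but admits no valid pair. A coloop as $C_1^*$ violates (i). A two-element $C_1^*$ in the triangle forces $C_2^*$ to be a coloop, and then $\mu(N\backslash C_2^*)=103$ has the wrong parity.

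The missing idea in Case (i) is to go all the way to $U_{4,4}$. If $M\backslash C^*=U_{2,2}\oplus R$ with $r(R)=3$, apply Theorem~\ref{thm:BonusColoop+} to $R$. No rank-three matroid appears in its Case (ii), so $R\backslash D^*=U_{2,2}$ for some cocircuit $D^*$, and $N=U_{4,4}$ is an induced restriction. There every singleton is a cocircuit, and the fourth element is exactly what resolves the conflict you identify. Take $C_1^*$ to be the lightest element. Among the remaining three, two have equal parity; let $C_2^*$ be the third. Then (i) follows from minimality and (ii) from the parity choice. Your fallback of using unions of two candidate cocircuits does not help, since such unions are generally not cocircuits (e.g.\ in $U_{4,4}$).

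\textbf{Case (ii).} Your treatment also fails as stated. $M(K_5)$ has no two disjoint cocircuits at all, since any two edge cuts of a complete graph meet. $M(K_3)\oplus M(K_3)$ fails for weights $2,2,1$ and $1,1,1$ on its two triangles. No fixed choice of $N$ works; the choice must depend on $\mu$, and this is where the clause ``for any $e$'' in Case (ii) is essential. The repair goes as follows.
\begin{enumerate}
\item Take $e_1$ of globally minimal weight and split it off: $M\backslash D_1^*=U_{1,1}\oplus M'$, with $M'$ the cycle matroid of $K_5$ or $K_3\oplus K_3$.
\item Split off $e_2\in E(M')$, chosen of odd weight if possible. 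This reaches $N=U_{2,2}\oplus M''$ with $M''=M(K_3)$ and $E(U_{2,2})=\{e_1,e_2\}$.
\item Set $C_1^*=\{e_1\}$. Inequality (i) is then automatic by global minimality of $e_1$.
\item If $\mu(M'')$ is even, take $C_2^*=\{e_2\}$. Otherwise some $e''\in E(M'')$ has odd weight, so $\mu(e_2)$ is odd by the choice of $e_2$. Then $C_2^*=E(M'')-e''$ gives $\mu(N\backslash C_2^*)-\mu(C_1^*)=\mu(e_2)+\mu(e'')$, which is even.
\end{enumerate}
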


\begin{proof}
    First, we prove the result when $U_{4,4}$ is an induced restriction of $M$. %there are two cocircuits of $M$ whose deletion leaves a $U_{4,4}$. 
    In this case, we can set $N = U_{4,4}$, choose $C_1^*$ to be the minimum weight coloop of $N$, and choose $C_2^*$ so that the sum of the third and fourth coloops is even. This is possible since $N \backslash C_1^* = U_{3,3}$, which means at least two of these three coloops have the same weight modulo two.
    
    Second, we prove that $U_{4,4}$ arises as an induced restriction if there exists a cocircuit $C^*$ that splits off $U_{2,2}$ as in Conclusion (i) of Theorem \ref{thm:BonusColoop}. Indeed, what remains is $U_{2,2} \oplus R$ where $R$ has rank three. Applying the Theorem \ref{thm:BonusColoop+} to $R$ yields a $U_{2,2}$ splitting of $R$. Hence we obtain $U_{4,4}$ by deleting two cocircuits. %altogether we have four coloops after deleting two cocircuits. 
    This shows the claim.

    We may therefore assume that $M$ does not admit a $U_{2,2}$ splitting. We fix $e_1 \in E(M)$ having minimum weight, and we apply Theorem \ref{thm:BonusColoop} to find a cocircuit $D_1^*$ such that
        \[M \backslash D_1^* = U_{1,1} \oplus M',\]
    where $E(U_{1,1}) = \{e_1\}$ and where $M'$ is a rank-four minor of $M$. Moreover, by \ref{thm:BonusColoop+}, $M'$ is the cycle matroid on $K_5$ or $K_3\oplus K_3$.
    
    We fix $e_2$ so that $\mu(e_2)$ is odd if possible, and otherwise we fix any $e_2 \in E(M')$. By Theorem \ref{thm:BonusColoop}, we can delete a cocircuit $D_2^*$ of $M'$ to obtain a splitting 
        \[M' \backslash D_2^* = U_{1,1} \oplus M'',\]
    where $E(U_{1,1}) = \{e_2\}$ and where $M''$ is a rank-two minor of $M'$ and therefore of $M$. Now consider the matroid
        \[N = (M \backslash D_1^*) \backslash D_2^* = U_{2,2} \oplus M'',\]
    where $E(U_{2,2}) = \{e_1, e_2\}$. Moreover, by Theorem~\ref{thm:BonusColoop+}, $M''$ is the cycle matroid of $K_3$. %\cong M(K_3)$ since otherwise we have a $U_{4,4}$ splitting.
    
    If $\mu(M'')$ is even, then setting $C_1^* = \{e_1\}$ and $C_2^* = \{e_2\}$ gives us a solution to our problem. If $\mu(M'')$ is odd, then some element $e'' \in E(M'')\subseteq E(M')$ has odd weight. Our choice of $e_2$ implies $\mu(e_2)$ is also odd. 
    %Since $M'' = M(K_3)$, the set $E(M'') - e''$ is a cocircuit $C_2^*$. Together with $C_1^* = \{e_1\}$, we have a solution to our problem since $C_1^*$ and $C_2^*$ are disjoint and 
    If $C_1^*=\{e_1\}$ and $C_2^*=E(M'')-e''$, then 
        \[\mu\of{N \backslash (C_1^* \oplus C_2^*)}
        = \mu(e_2) + \mu(e'') \equiv 0 \bmod 2,\]
    and hence $C_1^*$ and $C_2^*$ are disjoint cocircuits of $N$ that satisfy the claim.
\end{proof}

The following example shows that in rank five, Corollary~\ref{cor:MinimalModularColoops} fails.
\begin{example}
    Let $M$ be the cycle matroid of $K_6$. Fix a vertex $v$ of $K_6$ and let $\mu$ be the weight function such that $\mu(e)=2$ for all edges $e$ incident to $v$ and $\mu(e)=1$ otherwise. 
    In this weighted matroid, there are, up to isomorphism, six weighted matroids that arise as induced restrictions of $(M,\mu)$ with rank four or greater. In particular, we have one weight on $M(K_6)$, two weights on $M(K_5)$, one weight on $M(K_3) \oplus M(K_3)$, and two weights on $M(K_2)\oplus M(K_4)$. 
    
    If $N$ is $M(K_6)$ or $M(K_5)$, then $N$ has no two disjoint cocircuits and the result fails. 
    
    If $N$ is $M(K_3)\oplus M(K_3)$ then $\mu(N)=8$ and one may take a cocircuit $C_1^*$ with $\mu(C_1^*)=2$, but $\mu(C_2^*)\geq 3$ for all cocircuits $C_2^*$ that are disjoint from $C_1^*$, so the estimate $3\mu(C_1^*) \leq \mu(N \backslash C_2^*)$ fails for any such choice. 
    
    If $N$ is $M(K_2)\oplus M(K_4)$, then let $e=E(M(K_2))$. Thus either $\mu(e)=2$ or $\mu(e)=1$. Suppose first that $\mu(e)=2$. If $C_1^*\subseteq E(M(K_4))$ then $C_2^*=\{e\}$ and $3\leq \mu(C_1^*)$ while $\mu(N\backslash C_2^*)=6$, and Condition (i) fails. Therefore $C_1^*=\{e\}$ and for any cocircuit $C_2^*$ disjoint from $C_1^*$, we have $\mu(N\backslash C_2^*)\leq 5$ and Condition (i) fails. Now suppose $\mu(e)=1$. Again if $C_1^*\subseteq E(M(K_4))$, then $C_2^*=\{e\}$ and $4\leq\mu(C_1)$ while $\mu(N\backslash C_2^*)=9$. Instead let $C_1^* = \{e\}$. Up to symmetry, there are three options for $C_2^*$, that is, a four edge cut with weight six, a three edge cut with weight four, or a three edge cut with weight six. In any case, $\mu(N\backslash C_2^*) \equiv 0 \bmod 2$, so Condition (ii) fails.
\end{example}

While the above example shows that both Conditions (i) and (ii) of Corollary~\ref{cor:MinimalModularColoops} need not hold in rank five, the next result shows that Condition (i) still holds.

\begin{theorem}\label{thm:rank-5 cocircuits}
    If $M$ is a simple regular matroid of rank five and if $\mu$ is a real-valued function on $E(M)$, then there is an induced restriction $N$ of $M$ with rank four and disjoint cocircuits $C_1^*$ and $C_2^*$ such that $3\mu(C_1^*)\leq \mu(N\backslash C_2^*)$.
\end{theorem}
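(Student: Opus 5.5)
Since $M$ has rank five, Theorem~\ref{thm:BonusColoop+} applies, and I split according to which of its two conclusions holds. I assume $\mu$ is nonnegative, as in the weighted setting of Corollary~\ref{cor:MinimalModularColoops}. With arbitrary signs the inequality can fail trivially: if all weights are negative and $C_2^*=\emptyset$ is not allowed, one would need a separate argument. I flag this as a point to check against the intended hypotheses.

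\textbf{Case (i).} There is a cocircuit $C^*$ with $M\backslash C^* = U_{2,2}\oplus R$, where $R$ is a simple regular matroid of rank two. If $R$ contains a $U_{2,2}$ restriction obtained by deleting a cocircuit, or if $R$ is $M(K_3)$, then take $N = U_{2,2}\oplus R'$ for a suitable rank-two piece $R'$. I am unsure whether $N$ must itself be cocircuit-deletion induced, so I take ``induced restriction'' in the sense used in the proof of Corollary~\ref{cor:MinimalModularColoops}, where $N$ arises by deleting cocircuits. So let $N = U_{2,2}\oplus R$, which has rank four. Its cocircuits include the two coloops $\{e_1\},\{e_2\}$ and every cocircuit of $R$; when $R=M(K_3)$ the latter are the three pairs of edges.

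I would choose $C_1^*$ to be a cocircuit of minimum weight among these, and $C_2^*$ disjoint from it so that $\mu(N\backslash C_2^*)$ is as large as possible. The plan is an averaging argument. The coloops $\{e_1\},\{e_2\}$ and a suitable family of cocircuits of $R$ cover $E(N)$ with controlled multiplicity, which should give $3\min \le$ (weight remaining after deleting the lightest disjoint cocircuit). Concretely, when $R=M(K_3)$ with weights $a\le b\le c$ and coloops $x\le y$:
\begin{itemize}
\item[] if the minimum is $x$, take $C_2^*=\{b,c\}$...
\end{itemize}
Since bullet lists are discouraged, I state this in prose instead. If the minimum cocircuit is $\{e_1\}$ with weight $x$, take $C_2^*$ to be the lightest pair in $R$, i.e.\ $\{a,b\}$. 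Then $N\backslash C_2^*$ has weight $x+y+c$. Since $3x\le x+y+c$ holds as $x\le y$ and $x\le a+b\le 2c$, this needs checking: from $x\le y$ and $x\le c$ (indeed $x\le a+b$, but I need $x\le c$), we get $x+y+c\ge 3x$ provided $c\ge x$. If $c<x$, the minimum cocircuit would be a pair of $R$, contradicting minimality only when $a+b<x$. I would handle these subcases directly. If instead the minimum is a pair $\{a,b\}$ with $a+b\le x$, take $C_2^*=\{e_1\}$; the remaining weight $y+a+b+c$ must be at least $3(a+b)$, which follows from $y\ge x\ge a+b$ and $c\ge b$ once $a$ is small. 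Otherwise swap which pair is $C_1^*$. A short case check on orderings finishes this. If $R$ is $U_{2,2}$, the choice $N=U_{4,4}$ works as in the first paragraph of the proof of Corollary~\ref{cor:MinimalModularColoops}: take $C_1^*$ the lightest coloop and $C_2^*$ any other coloop.

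\textbf{Case (ii).} In rank five, Theorem~\ref{thm:BonusColoop+} leaves only $M=M(K_6)$. Choose $e_1$ of minimum weight and delete the bond around $e_1$ to get $U_{1,1}\oplus M(K_4)$. In $K_4$, choose a good edge $e_2$ and use Lemma~\ref{lem:Graphic_Kd} to obtain $N=U_{2,2}\oplus M(K_3)$, which reduces to the analysis of Case (i). I would instead take $N=U_{1,1}\oplus M(K_4)$ directly, with $C_1^*=\{e_1\}$. By minimality of $\mu(e_1)$ over all of $K_6$, every edge of $K_4$ has weight at least $\mu(e_1)$. Choose $C_2^*$ to be a vertex star of $K_4$, which is a three-edge bond. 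The remaining triangle plus $e_1$ has weight at least $4\mu(e_1)\ge 3\mu(e_1)$, so this case is immediate.

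\textbf{Main obstacle.} The expected hard step is Case (i) with $R=M(K_3)$, where the minimum could be a pair inside the triangle. There I would argue that, among the two choices of $C_1^*$ (lightest coloop, or lightest pair in $R$), at least one satisfies the inequality with the complementary disjoint cocircuit. I would prove this by summing the two failing inequalities to get a contradiction.
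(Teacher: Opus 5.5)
Your Case (ii) is correct for nonnegative $\mu$: for $M=M(K_6)$ you split off a globally lightest edge $e_1$ and then delete a vertex star of the remaining $K_4$. The $U_{4,4}$ subcase of Case (i) is also correct. Your flag about nonnegativity is reasonable; the paper's argument uses it tacitly too.

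\textbf{The gap is Case (i) with $R=M(K_3)$.} There you fix $N=M\backslash C^*$ for whichever $U_{2,2}$-splitting cocircuit Theorem~\ref{thm:BonusColoop+} provides. You then assert that a case check on orderings yields suitable $C_1^*,C_2^*$. This is false. Give the coloops weights $x=y=2$ and the triangle weights $a=b=c=1$, so $\mu(N)=7$. Every cocircuit of $N=U_{2,2}\oplus M(K_3)$ has weight $2$: the two coloops and the three pairs in the triangle. Two pairs in the triangle are never disjoint, so every admissible choice gives $3\mu(C_1^*)=6>5=\mu(N\backslash C_2^*)$. Hence there is no contradiction to be obtained by summing failing inequalities, and your steps ``once $a$ is small'' and ``I would handle these subcases directly'' cannot be completed.

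This situation really occurs. View $M(G_3)$ as the $6$-cycle $123456$ with chords $24,46,62$. Put weight $1$ on $45,56,46$ and weight $2$ on every other edge. The bond separating $\{1,2,3\}$ from $\{4,5,6\}$ leaves the bridges $12,23$ and the triangle $456$, which is exactly the bad example. Your own computation shows what goes wrong. With $x\le y$ and $a\le b\le c$, the choice $C_1^*=\{e_1\}$, $C_2^*=\{a,b\}$ works whenever $x\le c$. So failure forces both split-off coloops to be strictly heavier than everything in the triangle.

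\textbf{Missing idea: the rank-four flat $N$ must be chosen depending on $\mu$.} The structural output of Theorem~\ref{thm:BonusColoop+} does not see the weights. The paper gets this from a locally minimal coloop splitting (Lemma~\ref{lem:LocallyMinimalColoop}): a cocircuit $C^*$ such that $M\backslash C^*$ has a coloop $e$ of minimum weight among all its elements. After that, $C_1^*=\{e\}$ and a cocircuit of weight at most half of the rank-three remainder suffice.

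Establishing such splittings requires going back into the classification. This means using the $6$-element cocircuits of $R_{10}$, and Hamiltonian cycles in $K_{3,3}$ and in the non-planar contractions of $G_{5,3}$ and $G_{5,4}$ for cographic non-graphic $M$. It also requires an analysis of $2$-connected graphs on five or six vertices. The splitting fails for $M(G_2)$, $M(G_3)$ and $M(G_4)$, which need a weight-adapted choice by hand.

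In the example above, the bond separating $\{3,4,5\}$ from $\{6,1,2\}$ leaves the weight-one edge $45$ as a bridge of minimum weight. Then $C_1^*=\{45\}$ and $C_2^*=\{34\}$ give $3\le 7$. Your argument has no mechanism for making such a choice.
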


\begin{proof}
Fix a weighted matroid $(M, \mu)$ as in the theorem. We prove this result in four special cases, which uses graphs $G_1,G_2,G_3,$ and $G_4$ from Figure~\ref{fig:six-vertex grpahs}. We prove in Lemma \ref{lem:LocallyMinimalColoop} below that one of these cases occurs. 

\vspace{1em}
{\bf Case 1}: 
Suppose there exists a cocircuit $C^*$ such that $M \backslash C^*$ has a coloop $e$ with the property that $\mu(e) \leq \mu(f)$ for all $f \in E(M \backslash C^*)$. If $M\backslash C^* = U_{4,4}$, the result holds immediately by taking $C_1^*$ and $C_2^*$ to be the two lightest elements of $M\backslash C^*$. Otherwise $M \backslash C^* = U_{1,1} \oplus N$ where $E(U_{1,1}) = \{e\}$ and where $N$ is a rank-three minor on at least four elements. Let $C_1^* = \{e\}$ and choose a cocircuit $C_2^*$ in $N$ satisfying $\mu(C_2^*) \leq \tfrac 1 2 \mu(N)$ (using, for example, \cite[Theorem D]{DouthittIsraelKennard26}). Hence $3 \mu(C_1^*) + \mu(C_2^*) \leq \mu(U_{1,1} \oplus N)$, as needed.

\vspace{1em}
{\bf Case 2}: 
$M$ is $M(G_1)$ or $M(G_2)$, where $G_1$ and $G_2$ are the two possible $1$-sums of $K_3$ and $K_4\backslash e$. Both graphs yield isomorphic cycle matroids. Let $f$ be the edge adjacent to the two degree-three vertices of $K_4\backslash e$. If an edge $h$ of $K_3$ has minimum weight in $M\backslash f$, there is a cocircuit deletion leaving $h$ as a bridge. Take $C_1^* = \{h\}$ and $C_2^*\subseteq E(M(K_4\backslash e))$ to be any cocircuit that contains $f$. Then $3\mu(C_1^*) \leq \mu(N\backslash C_2^*)$. On the other hand, if the minimum-weight element of $M\backslash f$ is an edge $h$ of $K_4\backslash e$, there is a cocircuit deletion which leaves $h$ and $g$ as bridges where $g$ is another edge of $K_4\backslash e$. Taking $C_1^* = \{h\}$ and $C_2^* = \{g\}$ gives the desired conclusion.

\vspace{1em}
{\bf Case 3}: 
$M = M(G_3)$, where $G_3$ is the $6$-cycle on vertices $1,\ldots,6$ with additional edges $24$, $46$, and $62$. Among the edges of this $6$-cycle, up to relabeling of the vertices, the edge $12$ has the lowest weight. There is a cocircuit deletion leaving $12$ and $23$ as bridges and the triangle connecting $4$, $5$, and $6$. Taking $C_1^* = \{12\}$ and $C_2^* = \{23\}$ solves our problem, since
\[3\mu(C_1^*) \leq \mu(12) + \mu(45)+\mu(56) \leq \mu(N\backslash C_2^*).\]

\vspace{1em}
{\bf Case 4}: 
$M = M(G_4)$, where $G_4 = K_6 \backslash F$ and where $F$ is the edge set of a subgraph isomorphic to $K_{1,3}$. Let $u$ be the vertex of $K_6 \backslash F$ that is adjacent to only two other vertices, $v_1$ and $v_2$. Let $b_1, \dots, b_6$ be the edges incident to exactly one of $v_1$ or $v_2$ and not incident to $u$. Up to relabeling, $b_1$ is incident to $v_1$ and has the lowest weight out of all $b_i$. There is a cocircuit deletion leaving $uv_1$ and $b_1$ as bridges and a triangle containing some $b_j$, $b_k$, and one unlabeled edge. Taking $C_1^* = \{b_1\}$ and $C_2^* = \{uv_1\}$ solves our problem, since
$3\mu(C_1^*) \leq \mu(b_1) + \mu(b_j)+\mu(b_k) \leq \mu(N\backslash C_2^*).$
\end{proof}

\begin{figure}[h]
\centering
\begin{subfigure}{0.24\textwidth}
\centering
    \begin{tikzpicture}[scale=.35]
        \node[circle,fill=black,minimum size=3pt, inner sep=2.5pt] (v1) at (0,0) {};
        \node[circle,fill=black,minimum size=3pt, inner sep=2.5pt] (v2) at (3,0) {};
        \node[circle,fill=black,minimum size=3pt, inner sep=2.5pt] (v3) at (0,6) {};
        \node[circle,fill=black,minimum size=3pt, inner sep=2.5pt] (v4) at (0,3) {};
        \node[circle,fill=black,minimum size=3pt, inner sep=2.5pt] (v5) at (3,3) {};
        \node[circle,fill=black,minimum size=3pt, inner sep=2.5pt] (v6) at (3,6) {};
        \draw[line width=1pt] (v1) -- (v2);
        \draw[line width=1pt] (v1) -- (v4);
        \draw[line width=1pt] (v6) -- (v3);
        \draw[line width=1pt] (v6) -- (v5);
        \draw[line width=1pt] (v3) -- (v5);
        \draw[line width=1pt] (v2) -- (v4) node[midway, above, xshift=3pt,yshift=-4pt] {\tiny $f$};
        \draw[line width=1pt] (v2) -- (v5);
        \draw[line width=1pt] (v4) -- (v5);
    \end{tikzpicture}
    \subcaption*{$G_1$}
    \end{subfigure}
    \begin{subfigure}{0.24\textwidth}
    \centering
    \begin{tikzpicture}[scale=.35]
        \node[circle,fill=black,minimum size=3pt, inner sep=2.5pt] (v1) at (0,0) {};
        \node[circle,fill=black,minimum size=3pt, inner sep=2.5pt] (v2) at (3,0) {};
        \node[circle,fill=black,minimum size=3pt, inner sep=2.5pt] (v3) at (0,6) {};
        \node[circle,fill=black,minimum size=3pt, inner sep=2.5pt] (v4) at (0,3) {};
        \node[circle,fill=black,minimum size=3pt, inner sep=2.5pt] (v5) at (3,3) {};
        \node[circle,fill=black,minimum size=3pt, inner sep=2.5pt] (v6) at (3,6) {};
        
        \draw[line width=1pt] (v1) -- (v2);
        \draw[line width=1pt] (v1) -- (v4);
        \draw[line width=1pt] (v6) -- (v3);
        \draw[line width=1pt] (v6) -- (v4);
        \draw[line width=1pt] (v3) -- (v4);
        \draw[line width=1pt] (v2) -- (v4) node[midway, above, xshift=3pt,yshift=-4pt] {\tiny $f$};
        \draw[line width=1pt] (v2) -- (v5);
        \draw[line width=1pt] (v4) -- (v5);
    \end{tikzpicture}
    \subcaption*{$G_2$}
    \end{subfigure}
\begin{subfigure}{0.24\textwidth}
\centering
    \begin{tikzpicture}[scale=.35]
        \node[circle,fill=black,minimum size=3pt, inner sep=2.5pt] (v1) at (0,0) {};
        \node[circle,fill=black,minimum size=3pt, inner sep=2.5pt] (v2) at (4,0) {};
        \node[circle,fill=black,minimum size=3pt, inner sep=2.5pt] (v3) at (8,0) {};
        \node[circle,fill=black,minimum size=3pt, inner sep=2.5pt] (v4) at (2,3) {};
        \node[circle,fill=black,minimum size=3pt, inner sep=2.5pt] (v5) at (6,3) {};
        \node[circle,fill=black,minimum size=3pt, inner sep=2.5pt] (v6) at (4,6) {};
        \draw[line width=1pt] (v1) -- (v2);
        \draw[line width=1pt] (v1) -- (v4);
        \draw[line width=1pt] (v2) -- (v3);
        \draw[line width=1pt] (v3) -- (v5);
        \draw[line width=1pt] (v2) -- (v4) node[midway, below, xshift=-4pt,yshift=4pt] {\small $e_3$};
        \draw[line width=1pt] (v2) -- (v5) node[midway, below,xshift=7pt,yshift=4pt] {\small $e_{1}$};
        \draw[line width=1pt] (v4) -- (v5) node[midway, above,yshift=-2pt] {\small $e_{2}$};
        \draw[line width=1pt] (v5) -- (v6);
        \draw[line width=1pt] (v4) -- (v6);
    \end{tikzpicture}
    \subcaption*{$G_3$}
    \end{subfigure}
    \begin{subfigure}{0.24\textwidth}
    \centering
    \begin{tikzpicture}[scale=.35]
        \node[circle,fill=black,minimum size=3pt, inner sep=2.5pt] (v1) at (0,.75) {};
        \node[circle,fill=black,minimum size=3pt, inner sep=2.5pt] (v2) at (3,-.75) {};
        \node[circle,fill=black,minimum size=3pt, inner sep=2.5pt] (v3) at (6,.75) {};
        \node[circle,fill=black,minimum size=3pt, inner sep=2.5pt] (v4) at (1.5,3) {};
        \node[circle,fill=black,minimum size=3pt, inner sep=2.5pt] (v5) at (4.5,3) {};
        \node[circle,fill=black,minimum size=3pt, inner sep=2.5pt] (v6) at (3,5.25) {};
        \draw[line width=1pt] (v1) -- (v2);
        \draw[line width=1pt] (v1) -- (v3);
        \draw[line width=1pt] (v1) -- (v5);
        \draw[line width=1pt] (v3) -- (v4);
        \draw[line width=1pt] (v1) -- (v4);
        \draw[line width=1pt] (v2) -- (v3);
        \draw[line width=1pt] (v3) -- (v5);
        \draw[line width=1pt] (v2) -- (v4);
        \draw[line width=1pt] (v2) -- (v5);
        \draw[line width=1pt] (v4) -- (v5) node[midway, above, yshift=-3pt] {\small $e$};
        \draw[line width=1pt] (v5) -- (v6);
        \draw[line width=1pt] (v4) -- (v6);
    \end{tikzpicture}
    \subcaption*{$G_4$}
    \end{subfigure}
    \caption{Six-vertex graphs whose graphic matroids have no locally minimal coloop splitting} \label{fig:six-vertex grpahs}
\end{figure}

To finish the proof of Theorem~\ref{thm:rank-5 cocircuits}, we show the following:

\begin{lemma}[Locally minimal coloop splitting]
\label{lem:LocallyMinimalColoop}
    If $M$ is a simple regular matroid of rank between two and five that is not isomorphic to $M(G_i)$ for some $i \in \{2, 3, 4\}$, then, for any real-valued weight function on $E(M)$, there exists a cocircuit $C^*$ such that $M \backslash C^*$ has a coloop $e$ with the property that $e$ has minimum weight in $E(M \backslash C^*)$.
\end{lemma}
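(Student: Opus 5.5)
We argue by minimal counterexample and reduce to a finite check on graphs with at most six vertices. Fix $(M,\mu)$ and let $e_0$ be an element of globally minimum weight. If some cocircuit $C^*$ leaves $e_0$ as a coloop, we are done, since $e_0$ is then lightest among all remaining elements. More generally, it suffices to find a cocircuit $C^*$ and a coloop $e$ of $M\backslash C^*$ with $\mu(e)\le\mu(f)$ for every $f\in E(M\backslash C^*)$. By Theorem~\ref{thm:BonusColoop+}, in Case~(ii) every element can be split off as a coloop, so $e_0$ works immediately. This covers cluster graphs $M(K_d)$ and their direct sums, and also $K_2$ summands, since deleting the other components leaves a coloop.

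We may therefore assume Case~(i) holds, and we first reduce to $M$ connected. If $M=M_1\oplus M_2$, pick the summand $M_i$ containing $e_0$. If $r(M_i)\ge 2$ and $M_i$ is not one of the exceptions, apply the lemma to $M_i$ by induction on rank. Since cocircuits of $M_i$ are cocircuits of $M$, the coloop $e$ of $M_i\backslash C^*$ is lightest in $M_i\backslash C^*$. It is not yet lightest among the elements of the other summands, which do remain. Rather than induct directly, we delete the other summands: for $M_j$ ($j\ne i$), delete the whole component $E(M_j)$. This is a union of cocircuits but not a single cocircuit, so that does not work either. Instead, we use $e_0$ directly. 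Choose $C^*$ inside $M_i$ so that $e_0$ itself becomes a coloop. Then $e_0$ is globally minimal and is lightest among the survivors. So in the disconnected case we need the stronger property ``every element can be split off as a coloop''. Applying that property to $M_i$ with $e=e_0$ finishes, as in Theorem~\ref{thm:BonusColoop+}(ii). If instead $e_0\in M_i$ with $M_i=M(K_2)$, delete any cocircuit of another summand.

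The core is therefore connected $M$. We try to prove the stronger claim: \emph{every element $e$ is a coloop of $M\backslash C^*$ for some cocircuit $C^*$}, except for an explicit small list. For graphic $M=M(G)$ with $G$ 2-connected and simple, an edge $uv$ becomes a bridge after deleting a bond exactly when there is a cycle-free way to separate it. Concretely, take the bond $\delta(X)$ with $X\supseteq\{u,v\}$ such that $G[X]$ is connected, $uv$ is a bridge of $G[X]$, and $G\backslash X$ is connected. The natural choice is $X=\{u,v\}$ when $G-\{u,v\}$ is connected. If every edge admits such a bond, we are done. Otherwise some edge $uv$ has $\{u,v\}$ as a 2-vertex cut. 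In that case we run a weighted argument: compare $e_0$ with the edges in each side of the 2-separation, and use the split-off $P_2$ from Lemma~\ref{lem:Graphic_notKd} to obtain a bridge that is locally minimal. This works unless all small candidates are blocked, and with at most six vertices that happens only for $G_2,G_3,G_4$ (and $G_1$, whose matroid coincides with $M(G_2)$). The cographic case runs dually via Theorem~\ref{thm:BonusColoop-Cographic}, choosing the cycle through the lightest element. The noncographic, nongraphic matroids in rank at most five are only $R_{10}$, which is handled by transitivity: every element lies in a $U_{4,4}$ split.

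The main obstacle is the graphic case with 2-vertex cuts. There the weighted condition genuinely fails for $G_2,G_3,G_4$, so the argument cannot be purely structural. We expect to need a case enumeration of 2-connected simple graphs on at most six vertices having a 2-cut $\{u,v\}$ with $uv\in E$, checking each against the weight-independent splittings.
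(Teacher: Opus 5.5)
Your proposal has genuine gaps. The disconnected reduction fails, and the connected case is left as an unperformed enumeration.

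\textbf{The disconnected case.} You end by requiring that the summand $M_i$ containing the lightest element $e_0$ allow $e_0$ itself to be split off as a coloop. Many summands allowed by the hypotheses lack this global property, and the lemma does not exclude them. Take $M = U_{1,1}\oplus M(K_4\backslash e)$, of rank four, with the edge $f$ joining the two degree-three vertices uniquely lightest. No bond of $K_4\backslash e$ leaves $f$ as a bridge, so your argument stalls. Yet the lemma holds here: deleting the star at a degree-three vertex (it contains $f$) leaves a two-edge path, so $M\backslash C^*\cong U_{3,3}$.

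The observation you are missing is the paper's. If some summand is $U_{k,k}$, its elements remain coloops after any cocircuit deletion in the other summand. So the merely \emph{locally} minimal property of that other summand, obtained by induction on rank, already suffices. If no summand is free, rank at most five forces $M = M(K_3)\oplus M_2$ with $r(M_2)\in\{2,3\}$. There both summands have the global property unless $M_2 = M(K_4\backslash e)$, which is exactly $M\cong M(G_2)$.

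\textbf{The connected case.} Here the decisive step is absent rather than sketched.
\begin{enumerate}
\item[(a)] \emph{Graphic case.} When an edge $uv$ has $\{u,v\}$ as a $2$-cut, the $P_2$ split from Lemma~\ref{lem:Graphic_notKd} produces two bridges. But the remaining subgraph may contain lighter edges, so this gives no control over weights, and you do not say what replaces it.
\item[(b)] \emph{Cographic case.} Theorem~\ref{thm:BonusColoop-Cographic} yields \emph{some} cycle with two chords, with no choice of which edges become chords. Moreover, the coloops of $M^*(G)\backslash C = M^*(G/C)$ are the chords of $C$. A cycle through the lightest element therefore deletes it instead of splitting it off.
\end{enumerate}

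The paper's key idea is a weight-independent fallback. Whenever the global property fails, it finds a cocircuit $C^*$ with $M\backslash C^*\cong U_{r-1,r-1}$, so every surviving element is a coloop and the weights become irrelevant.
\begin{enumerate}
\item[(a)] In the graphic case this is a bond leaving a forest. It comes from a short explicit analysis of the components of $G-\{u,v\}$.
\begin{enumerate}
\item[$\bullet$] With two two-vertex components, use the bond separating $\{u,x_1,x_2\}$ from $\{v,y_1,y_2\}$.
\item[$\bullet$] With a singleton component, use one of the cuts $\delta(\{u,x\})$, $\delta(\{v,z\})$, $\delta(\{u,y\})$, $\delta(\{v,y\})$.
\end{enumerate}
This fails only for $G_3$ and $G_4$. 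Note that $G_1$ and $G_2$ have cut vertices, so they cannot arise here.
\item[(b)] In the cographic, non-graphic case it is a Hamiltonian cycle of $G/F$, where $M = M^*(G/F)$ with $G\in\{K_{3,3},G_{5,3},G_{5,4}\}$ and $G/F$ nonplanar.
\end{enumerate}
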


While it is not needed for the proof of Theorem \ref{thm:rank-5 cocircuits}, we note that the converse holds. For example, for $M(G_1)$, the weight function equal to $1$ on the edge labeled $f$, to $3$ on the edges of $(K_4 \backslash e)\backslash f$, and to $2$ on the edges in $K_3$, there is no locally minimal coloop splitting. Indeed, no bond leaves $f$ as a bridge, and any bond that contains $f$ leaves all of $K_3$. Thus $M$ does have a locally minimal coloop splitting.% not leave a bridge of minimal weight.

Similarly, for $M(G_3)$, we take a weight function equal to $1$ on the edges $e_1$, $e_2$, and $e_3$ and equal to $2$ on the remaining edges. The only edges that become bridges after deleting a bond are weight two, and every bond deletion also leaves a weight-one edge.

For $M(G_4)$, one can similarly check that putting weight $1$ on $e$, weight $2$ on the edges of the triangle connecting the degree-four vertices, and weight $3$ on all other edges produces a weighted matroid with no locally minimal coloop splitting.

\begin{proof}
    If $M$ has rank two or three, then $M = M(G)$ for some graph $G$ on three or four vertices. Unless $G = K_4 \backslash e$, we have the stronger property that any edge in $G$ can become a bridge after deleting some bond. This {\it globally minimal coloop splitting property} both implies the required property and is used later in the proof. For $K_4 \backslash e$, the edge $f$ vertex disjoint from $e$ might be the unique edge of minimum weight, but deleting bonds that contain $f$ allow us to realize any of the other edges as a bridge of the deletion. Therefore, the locally minimal property holds for $M(K_4 \backslash e)$.
    
    If $M$ is disconnected, then $M$ is a direct sum, $M_1 \oplus M_2$, of simple regular matroids of positive rank. If $M_1$ is $U_{k,k}$ for some $k \in \{1,2\}$, then by induction on the rank we can produce a locally minimal coloop splitting in $M_2$. Since the elements in $U_{k,k}$ remain coloops as well, we have in any case split off a coloop of minimum weight. If $M_1$ is not of this form, then for rank reasons we can relabel so that $M_1 = M(K_3)$ and $r(M_2) \in \{2, 3\}$. Note that $M_1$ has the globally minimal coloop splitting property. If $M_2$ does as well, then clearly $M_1 \oplus M_2$ has the globally minimal coloop splitting property and we are done. Otherwise $M_2 = M(K_4 \backslash e)$ as shown in the proof in rank three. In this case, $M$ is isomorphic to $M(G_2)$, so the proof is complete.

    We may now assume that $M$ is connected and has rank four or five. We have that $M$ is either graphic or cographic on a $2$-vertex-connected graph $G$ or equal to the sporadic matroid $R_{10}$ (see \cite{Seymour80,DouthittIsraelKennard26, DanilovGrishukhin99}). In the case of $M = R_{10}$, we have a $U_{4,4}$ splitting after deleting any of the $6$-element cocircuits, so the result holds.
    
    Suppose first that $M$ is cographic but not graphic. We may assume that 
        \[M = M^*(G)\backslash F = M^*(G/F)\] 
    for some $3$-edge-connected, cubic graph $G$ with Betti number $4$ or $5$ and some subset $F$ of edges with the property that $G/F$ is non-planar (see \cite[Lemma 1.3]{DouthittIsraelKennard26}). Note that $G$ is one of the graphs shown in Figure \ref{fig:K33_G53_G54}. Our strategy is to show that $M$ has a cocircuit whose deletion leaves a $U_{r-1, r-1}$ where $r = r(M)$. We accomplish this by showing that $G/F$ has a Hamiltonian cycle $C$, as then every edge of $(G/F)/C$ is a loop. This then guarantees that choosing the cocircuit of $M^*(G)$ corresponding to $C$ leaves a minimal weight coloop. It is quick to check that $K_{3,3}$ and $G_{5,g}$ are Hamiltonian for $g\in \{3,4\}$, so it suffices to show that the same holds for all contractions $G_{5,g}/F$ that are non-planar and not isomorphic to $K_{3,3}$.

\begin{figure}[ht]
\centering
\begin{subfigure}{0.24\textwidth}
\centering
    \begin{tikzpicture}[scale=.35]
        \node[circle,fill=black,minimum size=3pt, inner sep=2.5pt] (v1) at (0,0) {};
        \node[circle,fill=black,minimum size=3pt, inner sep=2.5pt] (v2) at (3,0) {};
        \node[circle,fill=black,minimum size=3pt, inner sep=2.5pt] (v3) at (6,0) {};
        \node[circle,fill=black,minimum size=3pt, inner sep=2.5pt] (v4) at (0,3) {};
        \node[circle,fill=black,minimum size=3pt, inner sep=2.5pt] (v5) at (3,3) {};
        \node[circle,fill=black,minimum size=3pt, inner sep=2.5pt] (v6) at (6,3) {};
        \draw[line width=1pt] (v1) -- (v4);
        \draw[line width=1pt] (v1) -- (v5);
        \draw[line width=1pt] (v1) -- (v6);
        \draw[line width=1pt] (v2) -- (v4);
        \draw[line width=1pt] (v2) -- (v5);
        \draw[line width=1pt] (v2) -- (v6);
        \draw[line width=1pt] (v3) -- (v4);
        \draw[line width=1pt] (v3) -- (v5);
        \draw[line width=1pt] (v3) -- (v6);
        
    \end{tikzpicture}
    \subcaption*{$K_{3,3}$}
    \end{subfigure}
    \begin{subfigure}{0.24\textwidth}
    \centering
    \begin{tikzpicture}[scale=.35]
        \node[circle,fill=black,minimum size=3pt, inner sep=2.5pt] (v1) at (0,1) {};
        \node[circle,fill=black,minimum size=3pt, inner sep=2.5pt] (v2) at (3,0) {};
        \node[circle,fill=black,minimum size=3pt, inner sep=2.5pt] (v3) at (6,1) {};
        \node[circle,fill=black,minimum size=3pt, inner sep=2.5pt] (v4) at (0,3) {};
        \node[circle,fill=black,minimum size=3pt, inner sep=2.5pt] (v5) at (3,3) {};
        \node[circle,fill=black,minimum size=3pt, inner sep=2.5pt] (v6) at (6,3) {};
        \node[circle,fill=black,minimum size=3pt, inner sep=2.5pt] (v7) at (1.5,6) {};
        \node[circle,fill=black,minimum size=3pt, inner sep=2.5pt] (v8) at (4.5,6) {};
        \draw[line width=1pt] (v1) -- (v2);
        \draw[line width=1pt] (v1) -- (v3);
        \draw[line width=1pt] (v2) -- (v3);
        \draw[line width=1pt] (v1) -- (v4);
        \draw[line width=1pt] (v2) -- (v5);
        \draw[line width=1pt] (v3) -- (v6);
        \draw[line width=1pt] (v4) -- (v7) node[midway, above, xshift=-6pt,yshift=-4pt] {$e$};
        \draw[line width=1pt] (v4) -- (v8);
        \draw[line width=1pt] (v5) -- (v7);
        \draw[line width=1pt] (v5) -- (v8);
        \draw[line width=1pt] (v6) -- (v7);
        \draw[line width=1pt] (v6) -- (v8);
        
    \end{tikzpicture}
    \subcaption*{$G_{5,3}$}
    \end{subfigure}
    \begin{subfigure}{0.24\textwidth}
    \centering
    \begin{tikzpicture}[scale=.35]
        \node[circle,fill=black,minimum size=3pt, inner sep=2.5pt] (v1) at (0,0) {};
        \node[circle,fill=black,minimum size=3pt, inner sep=2.5pt] (v2) at (2,-2) {};
        \node[circle,fill=black,minimum size=3pt, inner sep=2.5pt] (v3) at (4,-2) {};
        \node[circle,fill=black,minimum size=3pt, inner sep=2.5pt] (v4) at (6,0) {};
        \node[circle,fill=black,minimum size=3pt, inner sep=2.5pt] (v5) at (0,2) {};
        \node[circle,fill=black,minimum size=3pt, inner sep=2.5pt] (v6) at (2,4) {};
        \node[circle,fill=black,minimum size=3pt, inner sep=2.5pt] (v7) at (4,4) {};
        \node[circle,fill=black,minimum size=3pt, inner sep=2.5pt] (v8) at (6,2) {};
        \draw[line width=1pt] (v1) -- (v2);
        \draw[line width=1pt] (v2) -- (v3);
        \draw[line width=1pt] (v3) -- (v4);
        \draw[line width=1pt] (v5) -- (v6);
        \draw[line width=1pt] (v6) -- (v7);
        \draw[line width=1pt] (v7) -- (v8);
        \draw[line width=1pt] (v1) -- (v5) node[midway, above, xshift=-6pt,yshift=-7pt] {};
        \draw[line width=1pt] (v2) -- (v7);
        \draw[line width=1pt] (v3) -- (v6);
        \draw[line width=1pt] (v4) -- (v8);
        \draw[line width=1pt] (v1) -- (v8);
        \draw[line width=1pt] (v4) -- (v5);
        
    \end{tikzpicture}
    \subcaption*{$G_{5,4}$}
    \end{subfigure}
    \caption{Maximal Nonplanar Graphs}
    \label{fig:K33_G53_G54}
\end{figure}

    Consider first graphs of the form $G_{5,3}/F$. 
    Because it is non-planar, $F$ does not contain an edge in the same edge orbit as $e$. 
    Similarly, $F$ has at most two edges since otherwise $G_{5,3}/F$ is planar or $K_{3,3}$. 
    This leaves six cases up to symmetry, and in each case one can see that there is a Hamiltonian cycle in $G_{5,3}$ containing $F$ and hence one in $G_{5,3}/F$. This completes the proof in this case.

    Now consider $G_{5,4}/F$. By non-planarity, $F$ does not contain any of the chords of the outer $8$-cycle. Therefore, the outer $8$-cycle contains $F$ and hence contracts to a Hamiltonian cycle in $G_{5,4}/F$ with four chords, completing the proof.

    To finish the proof, we may assume that $M$ is the connected graphic matroid $M(G)$ for some simple, $2$-vertex-connected graph $G$ on five or six vertices. Moreover, we may assume $M(G)$ does not satisfy the globally minimal coloop splitting property, so there exists an edge $e = uv$
    of $G$ such that there is no bond $K$ where $e$ is a bridge in $G\backslash K$. In particular, $G\backslash \{u,v\}$ is disconnected. We complete the proof by showing that there is a cocircuit of $M$ whose deletion yields a $U_{r-1,r-1}$, where $r$ is the rank of $M$. This is equivalent to finding a bond of $G$ whose deletion leaves a forest.

    First suppose $G\backslash \{u,v\}$ has two components, $A_1$ and $A_2$, that have two vertices each. We denote the vertex sets by $V(A_i) = \{x_i, y_i\}$ for $i\in\{1,2\}$. By $2$-vertex-connectedness of $G$, both endpoints of $e$ must be adjacent to both components of $G\backslash \{u,v\}$. After relabeling, we may assume $ux_i$ and $vy_i$ are edges for $i\in \{1,2\}$. Regardless of which other edges may be present in $G$, the bond separating $\{u,x_1,x_2\}$ and $\{v,y_1,y_2\}$ leaves a forest with two components.
    
    Now suppose $G\backslash \{u,v\}$ has a component with a single vertex $w$. Since $G$ is $2$-vertex connected, $w$ forms a triangle with $u$ and $v$. Because $G$ is $2$-vertex-connected, there are vertices $x$ and $z$ not in the triangle $\{u, v, w\}$ such that $ux$ and $vz$ are edges. Moreover, if $|V(G)| = 5$, then the cut-set $K_{ux}$ of edges incident to $ux$ is a bond whose deletion leaves a forest with two components. We may therefore assume $|V(G)| = 6$. Let $y$ denote the remaining vertex.

    Consider the cut-set $K_{vz}$ consisting of edges incident to $vz$. Deleting $K_{vz}$ leaves a forest with two components unless both or neither of $yu$ and $yx$ are edges. By symmetry, we are done unless both $yv$ and $yz$ are edges. Since $y$ must be connected to something, we may assume after relabeling that $yu$ and $yx$ are edges.

    Now similarly consider the cut-sets $K_{ux}$ and $K_{uy}$ of edges incident to $ux$ and $uy$, respectively. Arguing as in the previous paragraph and using the fact that $u$ is not a cut-vertex, we may assume after relabeling that $yv$ and $yz$ are edges.

    Finally consider the cut-sets $K_{uy}$ and $K_{vy}$ defined similarly. The first is a solution to our problem unless both or neither of $vx$ and $xz$ are edges, and the second is unless both or neither of $uz$ and $xz$ are edges. Thus, $G$ has either none or all three of the edges $uz$, $vx$, and $xz$. Correspondingly, $G$ equals $G_3$ or $G_4$.
\end{proof}

The following example shows that Theorem \ref{thm:rank-5 cocircuits} does not hold in rank four and lower. 

\begin{example}[Rank-four counterexample] 
Let $M$ be $M(C_5)$ and let $\mu(e)=1$ for all $e\in E(M)$. If $N=M$, then for any two disjoint cocircuits $C_1^*$ and $C_2^*$ of $N$, we have $\mu(C_1^*)=2$ and $\mu(N\backslash C_2^*)=3$, and Condition (i) of Corollary~\ref{cor:MinimalModularColoops} fails. Therefore, $N$ must be a proper induced restriction of $M$. Since the result requires $r(N)\geq 3$, the only choice for $N$ up to isomorphism is $U_{3,3}$. In this case, $\mu(C^*)=1$ and $\mu(N\backslash C^*)=2$ for each cocircuit $C^*$ of $N$,
%. Moreover, for any two disjoint cocircuits $C_1^*$ and $C_2^*$ of $N$, we have $\mu(C_1^*)=1$ and $\mu(N\backslash C_2^*)=2$, 
and again Condition (i) of Corollary~\ref{cor:MinimalModularColoops} fails.
\end{example}

Finally, we note that rank four is home to a version of Corollary \ref{cor:MinimalModularColoops} and Theorem~\ref{thm:rank-5 cocircuits} that has a weaker assumption (in the sense of a smaller matroid to work with) and correspondingly a weaker conclusion. It is due to Nienhaus and is a key tool in his paper \cite{Nienhaus-pre}. For clarity and to demonstrate the extent to which Nienhaus' results motivated those in this paper, we state all three of his structure results in the following:

\begin{theorem}[Nienhaus]
Let $M$ be a simple regular matroid in rank four.
    \begin{enumerate}
        \item (Double coloop splitting) Either $M$ has a cocircuit $C^*$ such that $M \backslash C^* = U_{3,3}$, or $M$ is a graphic matroid on $K_5$ or $K_3 \oplus K_3$.
        \item (Locally minimal coloop splitting) For any $\mu \colon E(M) \to \mathbb R$, there exists cocircuit $C^*$ such that $M \backslash C^* = U_{1,1} \oplus N$ and $\mu(U_{1,1}) \leq \mu(f)$ for all $f \in E(N)$.
        \item For any $\mu: E(M) \to \mathbb R$, there exists an induced restriction $N$ of rank three that contains disjoint cocircuits, $C_1^*$ and $C_2^*$, with $2 \mu(C_1^*) \leq \mu(N \backslash C_2^*)$.
    \end{enumerate}
\end{theorem}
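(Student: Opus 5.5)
Part (1) follows from Theorem~\ref{thm:BonusColoop+} specialized to rank four. Part (2) is the rank-four case of Lemma~\ref{lem:LocallyMinimalColoop}. Part (3) is obtained from part (2), in the same way Case~1 of Theorem~\ref{thm:rank-5 cocircuits} was handled.

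For (1), apply Theorem~\ref{thm:BonusColoop+} with $r(M)=4$. If Conclusion~(ii) holds, then $M$ is graphic on $K_5$ or $K_3\oplus K_3$, since these are the only rank-four entries in the list. Otherwise there is a cocircuit $C^*$ with $M\backslash C^*=U_{2,2}\oplus N$, where $r(N)=1$. The matroid $N$ is a restriction of the simple matroid $M$, so it is simple, and a simple rank-one matroid is $U_{1,1}$. Hence $M\backslash C^*=U_{3,3}$.

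For (2), invoke Lemma~\ref{lem:LocallyMinimalColoop} directly. The excluded matroids $M(G_2)$, $M(G_3)$, $M(G_4)$ all have rank five, since each $G_i$ is a connected graph on six vertices, so they do not occur here. The lemma then gives a cocircuit $C^*$ such that $M\backslash C^*$ has a coloop $e$ of minimum weight among the remaining elements. Writing $M\backslash C^*=U_{1,1}\oplus N$ with $E(U_{1,1})=\{e\}$ gives the statement.

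For (3), take the splitting $M\backslash C^*=U_{1,1}\oplus N'$ from (2), and let the rank-three induced restriction be $N=U_{1,1}\oplus N'$.
\begin{enumerate}
\item[(a)] If $N'$ has at least three elements, set $C_1^*=\{e\}$ and choose a cocircuit $C_2^*$ of $N'$ with $\mu(C_2^*)\le\tfrac12\mu(N')$, as in Case~1, citing \cite[Theorem D]{DouthittIsraelKennard26}. This gives
\[
\mu(N\backslash C_2^*)=\mu(e)+\mu(N')-\mu(C_2^*)\ge \mu(e)+\tfrac12\mu(N').
\]
A rank-two simple $N'$ on three elements is $M(K_3)$, and then $\tfrac12\mu(N')\ge \tfrac32\mu(e)\ge\mu(e)$ because each element of $N'$ weighs at least $\mu(e)$. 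So $\mu(N\backslash C_2^*)\ge 2\mu(e)=2\mu(C_1^*)$.
\item[(b)] If $N'=U_{2,2}$, let $C_1^*$ be the lightest of the three coloops of $N$ and $C_2^*$ another one. The remaining two elements each weigh at least $\mu(C_1^*)$, which gives the factor $2$.
\end{enumerate}

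The main obstacle is the half-weight cocircuit step in (a). If the citation does not apply when weights are negative, I would instead argue by hand in $M(K_3)$. There are three cocircuits, each consisting of two edges, and the lightest one has weight at most $\tfrac23\mu(N')$. In the case $N'=M(K_3)$ this still gives
\[
\mu(N\backslash C_2^*)\ge \mu(e)+\tfrac13\mu(N')\ge 2\mu(e),
\]
using $\tfrac13\mu(N')\ge\mu(e)$ since all three edges weigh at least $\mu(e)$. This argument also needs only positivity or the minimality of $\mu(e)$.
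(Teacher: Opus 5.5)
Your proof is correct. The paper gives no proof of this theorem; it quotes it from Nienhaus, who analysed the $17$ simple regular matroids of rank four. Your argument is the Seymour-based reproof the paper advertises: (1) is Theorem~\ref{thm:BonusColoop+} in rank four, and (3) is the Case~1 argument of Theorem~\ref{thm:rank-5 cocircuits} run one rank lower.

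For (2), you do not need Lemma~\ref{lem:LocallyMinimalColoop}. As the paper remarks after Theorem~\ref{thm:BonusColoop+}, (2) follows directly from (1). A $U_{3,3}$ splitting leaves only coloops, so the lightest remaining element is a coloop. In the $K_5$ and $K_3\oplus K_3$ cases, Conclusion~(ii) lets you split off a globally lightest element. Going through the lemma is valid, but it imports that lemma's separate case analysis.

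In (3)(a), drop your first chain of inequalities. Its problem is not only negative weights. A cocircuit with $\mu(C_2^*)\le\tfrac12\mu(N')$ need not exist in $M(K_3)$ even for positive weights: with equal weights, every cocircuit carries $\tfrac23\mu(N')$. The paper invokes the half-weight cocircuit only for rank-three matroids on at least four elements. The step $\tfrac32\mu(e)\ge\mu(e)$ also fails when $\mu(e)<0$.

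Your by-hand fallback is the correct argument and works for all real weights. It can be simplified further: deleting any cocircuit $C_2^*$ of $M(K_3)$ leaves a single edge $f$. Since $f$ weighs at least $\mu(e)$, you get $\mu(N\backslash C_2^*)=\mu(e)+\mu(f)\ge 2\mu(e)$ with no choice of cocircuit needed.
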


\section{Application to Geometry}\label{sec:Geometry}

In this section, we translate the matroid-theoretic result, Corollary \ref{cor:MinimalModularColoops}, into the representation-theoretic result, Theorem \ref{thm:Representation}, we need in the geometric applications. We then prove Theorem \ref{thm:Ric_2} and analogous results in odd dimensions (see Theorem \ref{thm:Ric_2-odd} and Corollary \ref{cor:Ric_2-odd}). 

\begin{theorem}
\label{thm:Representation}
    If $\gT^6 \to \SO(V)$ is a faithful representation of a torus of rank six, then there exists a subgroup $\gH \subseteq \gT^6$ of dimension two and subgroups $\gH_i \subseteq \gH$ of dimension three for $i \in \{1, 2\}$ whose fixed-point sets $V^{\gH}$ and $V^{\gH_i}$ satisfy all of the following:
        \begin{enumerate}
            \item $V^{\gH_1}$ and $V^{\gH^2}$ are proper subspaces of $V^{\gH}$ that intersect transversely, 
            \item $3 \cod\of{V^{\gH_1} \subseteq V^{\gH}} \leq \dim V^{\gH_2}$, and
            \item $\dim\of{V^{\gH_1} \cap V^{\gH_2}} \equiv \dim V^{\gT^6} \bmod 4$.
        \end{enumerate}
\end{theorem}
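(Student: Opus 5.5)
The plan is to translate the statement into matroid language and then apply Corollary~\ref{cor:MinimalModularColoops}.

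\textbf{Setting up the weighted matroid.} Decompose
\[
V = V^{\gT^6} \oplus W_1 \oplus \cdots \oplus W_m,
\]
where each $W_j$ is a $2$-dimensional irreducible summand with weight $\pm w_j \in \Z^6 \cong \mathrm{Hom}(\gT^6, S^1)$. Group the weights by the line they span. Let $E$ be the set of these distinct lines and let $M$ be the linear matroid they define. For each $e \in E$, let $\mu(e) \in \Z_+$ be the number of summands $W_j$ whose weight lies on $e$.

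By construction $M$ is simple, and $\mu$ is a positive integer-valued weight function. Faithfulness of the representation gives $r(M) = 6$. To apply Corollary~\ref{cor:MinimalModularColoops}, I need $M$ to be regular. In the connected-isotropy setting this is exactly the link from \cite{KWW2} between such representations and regular matroids.

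\textbf{Dictionary between flats and subtori.} For a flat $F$ of $M$ of rank $k$, let $\gH_F$ be the identity component of the common kernel of the weights in $F$. This is a subtorus of dimension $6-k$, and
\[
V^{\gH_F} = V^{\gT^6} \oplus \bigoplus_{w_j \in F} W_j .
\]
Now apply Corollary~\ref{cor:MinimalModularColoops}. It gives an induced restriction $N = M|F$ of rank four, together with disjoint cocircuits $C_1^*$ and $C_2^*$ of $N$. Set $\gH = \gH_F$, which has dimension two. The complement $F_i = E(N) \setminus C_i^*$ is a hyperplane of $N$, hence a rank-three flat of $M$. Set $\gH_i = \gH_{F_i}$, which has dimension three and contains $\gH$.

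\textbf{Checking the three conditions.}
\begin{enumerate}
\item Each cocircuit $C_i^*$ is nonempty, so $V^{\gH_i}$ is a proper subspace of $V^{\gH}$. Since $C_1^* \cap C_2^* = \emptyset$, every summand $W_j \subseteq V^{\gH}$ lies in $V^{\gH_1}$ or in $V^{\gH_2}$. Hence $V^{\gH_1} + V^{\gH_2} = V^{\gH}$, which is transversality.
\item We have $\cod\of{V^{\gH_1} \subseteq V^{\gH}} = 2\mu(C_1^*)$ and $\dim V^{\gH_2} = \dim V^{\gT^6} + 2\mu(N \backslash C_2^*)$. Condition~(i) of the corollary therefore gives condition~(2).
\item By disjointness,
\[
\dim\of{V^{\gH_1} \cap V^{\gH_2}} = \dim V^{\gT^6} + 2\,\mu\of{N \backslash (C_1^* \cup C_2^*)},
\]
and
\[
\mu\of{N \backslash (C_1^* \cup C_2^*)} = \mu(N \backslash C_2^*) - \mu(C_1^*).
\]
Condition~(ii) of the corollary says this difference is even, so the intersection dimension is congruent to $\dim V^{\gT^6}$ mod $4$, which is condition~(3).
\end{enumerate}

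\textbf{Main obstacle.} The step I expect to be hardest is regularity of $M$. The statement only assumes faithfulness, not connected isotropy. I would first try to justify it via \cite{KWW2}, or by adding connected isotropy as a hypothesis in the way the application actually uses it. Failing that, I would try passing to an associated representation with connected isotropy, for instance by working with weights mod $2$ or with a maximal $2$-torus. Such a change preserves the fixed-point dimensions modulo the relevant amounts, since all quantities above depend only on which weights vanish on the relevant subgroups. The remaining steps are bookkeeping.
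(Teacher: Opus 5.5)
Once the weight matroid is known to be regular, your argument is the paper's. The dictionary between flats and identity components of common kernels is right. So is the choice $\gH\subseteq\gH_i$, which is the intended containment. Your derivation of (1)--(3) from the two conditions of Corollary~\ref{cor:MinimalModularColoops} is correct and more detailed than the paper's.

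\textbf{The gap.} It is the step you flag but do not close. The theorem assumes only faithfulness, and the rational matroid of the weight lines of a faithful representation need not be regular. For example, a rank-two torus with weights $(1,0),(0,1),(1,1),(1,-1)$ already gives $U_{2,4}$, and taking a product with the standard $\gT^4$-representation on $\mathbb{C}^4$ reaches rank six. So the corollary cannot be invoked. Your proposed remedies do not fix this:
\begin{itemize}
\item Assuming connected isotropy proves a weaker theorem.
\item Reducing weights mod $2$ changes which weights vanish on which subgroups. The new configuration then no longer computes fixed-point sets in $V$, which is exactly the data you claim is preserved.
\item The $2$-torus idea is not tied to any isotropy group, so nothing guarantees a faithful rank-six action on its fixed-point set.
\end{itemize}

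\textbf{The missing reduction.} The paper takes it from \cite[Lemma 1.11]{KWW1}: induct on $\dim V$ by passing to fixed-point sets of finite isotropy groups.
\begin{itemize}
\item If some isotropy group is disconnected, then some finite isotropy group $\Gamma=\gT^6_v$ is nontrivial. Otherwise every full-rank set of weights generates $\Z^6$. Then every set of weights generates a saturated sublattice, because a basis of its rational span extends to a full-rank independent set of weights, which is then a $\Z$-basis.
\item Let $L$ be the lattice generated by the weights of the summands in which $v$ has a nonzero component. The weights occurring in $V^\Gamma$ are exactly the weights lying in $L$. Hence the kernel of the action on $V^\Gamma$ is $\Gamma$ itself, and $\gT^6/\Gamma$ is a rank-six torus acting faithfully on $V^\Gamma$, with $\dim V^\Gamma<\dim V$.
\item Induction gives $\gH'\subseteq\gH_i'$ in $\gT^6/\Gamma$. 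Their preimages $\gH\subseteq\gH_i$ in $\gT^6$ have the same dimensions and satisfy $V^{\gH}=(V^\Gamma)^{\gH'}$ and $V^{\gH_i}=(V^\Gamma)^{\gH_i'}$. Also $V^{\gT^6}\subseteq V^\Gamma$ is unchanged, so (1)--(3) transfer.
\end{itemize}
These preimages contain $\Gamma$ and are generally disconnected. The statement permits this, since it asks for subgroups rather than subtori. It is also necessary: an identity component may additionally fix summands outside $V^\Gamma$ whose weights lie in the same rational span. In the base case, where all isotropy groups are connected, \cite{KWW2} gives regularity and your argument applies verbatim.
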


\begin{proof}
    We may assume that the representation has connected isotropy groups, since otherwise we can pass to the fixed-point set of a maximal finite isotropy group and conclude the result by induction over the dimension of $V$ (see \cite[Lemma 1.11]{KWW1}.

    Similarly, we may assume there are no trivial representations since we could replace $V$ by the orthogonal complement $V^{\gT^6}$ and again apply induction.

    Since $\rho$ has connected isotropy groups, its weights may be viewed as a regular matroid (see \cite{KWW2}). The condition on no trivial representations implies that this matroid has no loops. We also simplify the matroid by removing parallel edges and keep track of multiplicities by attaching integer weights to the ground set of this matroid.
    
    Corollary \ref{cor:MinimalModularColoops} now implies the result, as we explain. We recall that cocircuit deletions in the matroid correspond to passing to fixed-point sets of one-dimensional subgroups with the property that the induced action on the fixed-point set are faithful (see \cite[Section 1.4]{KWW2}). The induced restriction of rank four in Corollary~\ref{cor:MinimalModularColoops} corresponds to the fixed-point set of the codimension-four subgroup $\gH \subseteq \gT^6$, and the disjoint cocircuits satisfying the two conditions in that corollary correspond to the subgroups $\gH_1$ and $\gH_2$ whose codimension in $\gH$ is one. The disjointness implies (1), and the two conditions in the corollary imply (2) and (3). 
\end{proof}

We use this to improve a result implicit in the \cite[Proof of Theorem A]{KennardMouilleNienhaus}. Namely, that proof uses the following lemma but with our $\gT^6$ replaced by $\gT^7$.

\begin{lemma}
\label{lem:t6}
    If $F^{4f}$ is a fixed-point component of an effective $\gT^6$-action by isometries on a closed, orientable Riemannian manifold $M$ with $\Ric_2 > 0$, then $F$ has vanishing odd Betti numbers.
\end{lemma}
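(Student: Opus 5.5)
The plan is to follow the argument that \cite[Proof of Theorem A]{KennardMouilleNienhaus} uses for the $\gT^7$ version. The one place where the rank of the torus enters is the representation-theoretic splitting step, and there we substitute Theorem~\ref{thm:Representation}. Fix a point $x\in F$ and consider the isotropy representation of $\gT^6$ on the normal space $V = T_xM$. Since the action is effective and $F$ is a fixed-point component, this representation is faithful, and $V^{\gT^6} = T_xF$ has dimension $4f$. Theorem~\ref{thm:Representation} then gives a subgroup $\gH$ together with subgroups $\gH_1, \gH_2$. Let $N$ be the component of the fixed-point set of $\gH$ containing $F$, and let $N_1$ and $N_2$ be the components of the fixed-point sets of $\gH_1$ and $\gH_2$ containing $F$. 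These are totally geodesic submanifolds with $F \subseteq N_1\cap N_2 \subseteq N_i \subseteq N$, and their dimensions are read off from $V^{\gH}$ and $V^{\gH_i}$.

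The geometric input is the connectedness lemma for $\Ric_2>0$ from \cite{KennardMouilleNienhaus}, which is the analogue of Wilking's connectedness lemma. The argument proceeds in three steps.

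\begin{enumerate}
\item By conditions (1) and (2) of Theorem~\ref{thm:Representation}, $N_1$ and $N_2$ intersect transversely in $N$, and the codimension $k$ of $N_1$ in $N$ satisfies $3k \le \dim N_2$. The connectedness lemma, which needs a shift of one in the $\Ric_2$ setting, then makes the inclusion $N_1\cap N_2 \hookrightarrow N_2$ highly connected. This is exactly the regime in which the periodicity lemma of Wilking type, adapted to $\Ric_2$, applies: the cohomology of $N_2$ is periodic, with period $k$ and generated by the Euler class of the normal bundle.
\item Condition (3) says that $\dim(N_1\cap N_2)$ is congruent to $\dim F \equiv 0 \bmod 4$. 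The intention is to use this to force the periodicity to be compatible with even-degree-only cohomology. The $\gT^6$ action, or an appropriate circle factor, acts on $N_1\cap N_2$ with $F$ as a fixed component, so one inducts downward in dimension, with $F$ as the base case.
\item Finally, transfer vanishing of odd Betti numbers from $N_1 \cap N_2$, or from $N_2$ by periodicity, to $F$. For this we use the torus action: since $\chi(F)$ and the odd Betti numbers are controlled by the fixed-point components of circle subgroups via Conner's theorem, $b_{\mathrm{odd}}(N_2) = 0$ together with the torus action gives $b_{\mathrm{odd}}(F) = 0$.
\end{enumerate}

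I expect the main obstacle to be step 1. We must verify that the numerology produced by Theorem~\ref{thm:Representation}, namely $3\cod \le \dim$ and the mod $4$ condition, matches the hypothesis of the $\Ric_2$ periodicity lemma in \cite{KennardMouilleNienhaus}. In the $\gT^7$ argument that hypothesis was reached through a weaker splitting with more room. The first thing to try is to check that the inequality $3k\le \dim N_2$ leaves enough room after the loss of one in connectedness from $\Ric_2$. If it does not, the fallback is to use the extra freedom in choosing $C_1^*$ of minimal weight in Corollary~\ref{cor:MinimalModularColoops}.

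Orientability of the fixed-point components is needed for the Euler class argument. It follows because $M$ is orientable and the components are fixed by tori, so their normal bundles carry complex structures.
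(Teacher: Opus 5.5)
Your setup (the isotropy representation at a point of $F$, Theorem~\ref{thm:Representation}, and the components $N$, $N_1$, $N_2$ through $F$) agrees with the paper. So do your use of the $\Ric_2$ connectedness lemma with its loss of one and your final transfer to $F$ via torus fixed-point sets. The gap is in getting from periodicity to vanishing odd Betti numbers.

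The periodicity lemma only tells you that $H^*(N_2;\mathbb{Q})$ is $k$-periodic on degrees $1\le *\le \dim N_2-1$, where $k$ is the codimension of $N_1$ in $N$. Nothing controls $k$ modulo $4$, so the congruence in condition (3) has nothing to act on. Your Step~2 (``force the periodicity to be compatible with even-degree-only cohomology'', a downward induction ``with $F$ as the base case'') is therefore not an argument; as stated it presupposes the conclusion for $F$. The missing ingredient is the Partial Four Periodicity Theorem \cite[Theorem 4.2]{KennardMouilleNienhaus}: rational $k$-periodicity on $1\le *\le n-1$ with $k\le \tfrac13 n$ upgrades to rational $4$-periodicity on the same range. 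That is exactly where condition (2), $3k\le\dim N_2$, is used. The periodicity lemma itself only needs the connectedness coming from transversality. So the ``main obstacle'' you anticipate is misplaced, and the fallback through the choice of $C_1^*$ is not needed.

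With that theorem, the argument closes as in the paper:
\begin{enumerate}
\item $N_2$ is rationally $4$-periodic on $1\le *\le \dim N_2-1$.
\item The $(\dim(N_1\cap N_2)-1)$-connected inclusion $N_1\cap N_2\hookrightarrow N_2$ transfers $4$-periodicity to $N_1\cap N_2$.
\item Since $\dim(N_1\cap N_2)\equiv \dim F\equiv 0 \bmod 4$, Poincar\'e duality (with $b_1=0$, as $\pi_1$ is finite by Bonnet--Myers) kills all odd Betti numbers of $N_1\cap N_2$.
\item $F$, a fixed-point component of the $\gT^6$-action on $N_1\cap N_2$, inherits this.
\end{enumerate}
Your alternative of transferring from $N_2$ does not work. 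The dimension $\dim N_2=\dim(N_1\cap N_2)+k$ need not be divisible by $4$ and may be odd, in which case the closed orientable manifold $N_2$ has a nonzero odd Betti number in top degree. This is why the paper passes to $N_1\cap N_2$, the manifold to which condition (3) applies.
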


\begin{proof}
    Choose subgroups $\gH_i \subseteq \gH \subseteq \gT^6$ for $i \in \{1, 2\}$ as in Theorem \ref{thm:Representation}, and let $N$ be the fixed-point component of $\gH$ that contains $F^{4f}$, and let $N_i$ denote the component of $\gH_i$ containing $F^{4f}$ for $i \in \{1,2\}$.

    By the transversality condition together with the codimension condition in Theorem~\ref{thm:Representation}, the inclusion $N_1 \cap N_2 \to N_2$ is $c$-connected with $c = \dim(N_1 \cap N_2) - 1$ by the Connectedness Lemma (see \cite[Remark 2.4]{Wilking03}). By the Periodicity Lemma, the cohomology of $N_2$ is $k$-periodic on degrees $1 \leq * \leq \dim N_2 - 1$ for some $k \leq \tfrac 1 3 \dim N_2$ (see \cite[Lemma 2.2]{Wilking03} and \cite[Definition 1.1 and Example 1.2]{KennardMouilleNienhaus}). By the Partial Four Periodicity Theorem in \cite[Theorem 4.2]{KennardMouilleNienhaus}, the rational cohomology of $N_2$ is four-periodic on degrees $1 \leq * \leq \dim N_2 - 1$. Using the connectedness of the inclusion $N_1 \cap N_2 \to N_2$, we also have that $N_1 \cap N_2$ has four-periodic rational cohomology on degrees $1 \leq * \leq \dim(N_1 \cap N_2) - 1$. Combined with the congruence condition on the dimension of $N_1 \cap N_2$, this manifold has vanishing Betti numbers in odd degrees by Poincar\'e duality. Finally, the same holds for $F$ since it arises as a fixed-point component of a torus action on $N_1 \cap N_2$ (see \cite[Section 1]{KWW1}).
\end{proof}

With this lemma in place, the main theorem follows as in \cite{KennardMouilleNienhaus}. We only provide the sketch to explain the improvements where relevant to the torus bounds.

\begin{proof}[Proof sketch of Theorem \ref{thm:Ric_2}]
We are given an isometric $\gT^9$-action on a closed, orientable Riemannian manifold $M^{4n}$ with $\Ric_2 > 0$. By the Isotropy Rank Lemma, there exists a subgroup $\gT^7$ with a non-empty fixed-point set. Fix a component $F$ of this fixed-point set.

By passing to a kernel of an irreducible subrepresentation of odd multiplicity or by replacing $M$ by the fixed-point component containing $F$ of any circle subgroup of $\gT^7$, we may assume by induction on the dimension of $M$ that $F$ is a $\gT^7$-fixed-point component of an induced action on a $\gT^6$-fixed-point component $G$ with dimension divisible by four. By Lemma \ref{lem:t6}, we conclude that $G$ has vanishing rational cohomology in odd degrees, and we can push down to conclude the same for $F$.

Since the previous argument applies for any fixed-point component $F$ of the $\gT^7$-action on $M$, and since there is at least one such component, we have
    \[\chi(M) = \sum \chi(F) > 0.\]
This proves the positivity of the Euler characteristic. The stronger claim on the odd Betti numbers of fixed-point components of $\gT^7$-actions follows by the proof. 

For the statement in the case of connected isotropy groups, this follows immediately from the proof of Theorem B in \cite{KennardMouilleNienhaus} together with the fact that the positivity of the Euler characteristic implies that the $\gT^9$-action has a fixed point.
\end{proof}

To close this section we prove analogous geometric results in odd dimensions. The first is related to the fact that the Euler characteristic positivity in Theorem \ref{thm:Ric_2} implies that an $\gT^d$-action has a fixed point in dimensions divisible by four when $d \geq 9$. This reflects what occurs for metrics with positive sectional curvature. The analogous statement for odd dimensions is that some $\gT^{d-1}$ has a fixed-point or, equivalently, that the $\gT^d$ has a circle orbit.

\begin{theorem}\label{thm:Ric_2-odd}
A $(4n+1)$-dimensional closed manifold with a $\gT^9$-invariant Riemannian metric with positive second intermediate Ricci curvature has a circle orbit.
\end{theorem}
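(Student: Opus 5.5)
We mirror the proof sketch of Theorem \ref{thm:Ric_2}, replacing the Euler characteristic conclusion by an odd-dimensional analogue. We are given an isometric $\gT^9$-action on a closed manifold $M^{4n+1}$ with $\Ric_2 > 0$, and we may pass to the orientation double cover if needed. The first step is the Isotropy Rank Lemma, as in the even-dimensional argument. In odd dimensions it should produce a subgroup $\gT^7 \subseteq \gT^9$, or at worst a $\gT^6$ after accounting for the loss of one rank in odd dimensions, whose fixed-point set is non-empty. The goal is to show that some $\gT^8$ has non-empty fixed-point set; equivalently, the $\gT^9$-action has an orbit of dimension at most one, so that after excluding fixed points there is a circle orbit.

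The key steps are as follows. Fix a component $F$ of the fixed-point set of the $\gT^7$, or more generally of the largest isotropy subtorus available. Then run the same induction as in the sketch of Theorem \ref{thm:Ric_2}. Pass to the kernel of an irreducible subrepresentation of odd multiplicity, or to a circle fixed-point component, to arrange that $F$ is a $\gT^7$-fixed-point component inside a $\gT^6$-fixed-point component $G$ with $\dim G \equiv 1 \bmod 4$. Next, apply an odd-dimensional version of Lemma \ref{lem:t6}, with the same ingredients: Theorem \ref{thm:Representation}, the Connectedness Lemma, the Periodicity Lemma, and the Partial Four Periodicity Theorem. This should show that $N_1 \cap N_2$ has four-periodic rational cohomology in the middle range. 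The congruence condition (3) of Theorem \ref{thm:Representation} then puts $\dim(N_1\cap N_2)$ in the class $1 \bmod 4$, so Poincar\'e duality forces the rational cohomology to be that of $S^{4m+1}$ in the relevant range. In particular the odd Betti numbers vanish except in the top degree, and $\chi(F)$ is controlled accordingly.

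To convert this into a circle orbit, apply the Lefschetz-type or Euler characteristic argument to the residual torus $\gT^9/\gT^7$, which acts on $F$. If $F$ has the rational cohomology of a point or of an odd sphere, then a torus acting on $F$ has a subtorus of corank one with fixed points; this follows from Borel localization, or from $\chi(F^{\gT'}) = \chi(F)$ applied after passing to a circle-orbit-free complement. Concretely, choose a circle $S \subseteq \gT^9/\gT^7$ with $F^S$ of codimension at most one in cohomological terms. Then $F^S$ is even-dimensional with $\chi = \chi(\text{odd-sphere-like})$, and we iterate. The outcome is a codimension-one subtorus of $\gT^9$ fixing a point, which gives a circle orbit.

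The main obstacle is the odd-dimensional analogue of Lemma \ref{lem:t6}: getting the correct parity bookkeeping so that condition (3) yields $\dim(N_1 \cap N_2) \equiv 1 \bmod 4$ rather than $3 \bmod 4$. This may require reusing the mod-two freedom in Corollary \ref{cor:MinimalModularColoops} in the same way, with the fixed-point dimension shifted by one. Failing that, I would instead argue via the torus action on a rational cohomology $S^{4m+1}$, which always has a corank-one isotropy subtorus, and reduce to that case.
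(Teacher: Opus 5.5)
Your outline follows the paper's route. The Isotropy Rank Lemma gives a $\gT^7$ with a fixed point, you reduce to a $\gT^6$-fixed component $G$ with $\dim G\equiv 1 \bmod 4$, and Theorem~\ref{thm:Representation} with the Connectedness, Periodicity and Partial Four Periodicity results gives four-periodicity. The parity worry you flag is not an obstacle. Condition (3) compares $\dim(V^{\gH_1}\cap V^{\gH_2})$ with $\dim V^{\gT^6}=\dim G$, so $\dim(N_1\cap N_2)\equiv 1 \bmod 4$ follows once you have arranged $\dim G \equiv 1 \bmod 4$.

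The first genuine gap is your claim that four-periodicity plus Poincar\'e duality in dimension $4m+1$ forces the rational cohomology of $S^{4m+1}$. This is false: $S^1\times\mathbb{HP}^m$ satisfies both and has $b_1=1$. The missing input, which the paper uses explicitly, is Bonnet--Myers. $\Ric_2>0$ implies $\Ric>0$ and is inherited by totally geodesic fixed-point components. Hence their fundamental groups are finite and $b_1=0$, which is exactly what excludes such examples.

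The second gap is the endgame. Everything in play ($N_1\cap N_2$, $G$, $F$, and any $F^S$) is closed and odd-dimensional, so $\chi=0$ throughout. No Euler characteristic argument can therefore detect fixed points: $\chi(F^S)=\chi(F)=0$ is consistent with $F^S=\emptyset$. Also, $F^S$ has even codimension in $F$, so it is odd-dimensional, not even-dimensional as you wrote. Your parenthetical appeal to Borel localization points the right way, but it must be carried out, and it is needed twice.

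First, you need it to transfer the rational-sphere conclusion from $N_1\cap N_2$ down to $G$ and $F$. Fixed-point components of torus actions on rational spheres are again rational spheres, for instance from $\sum_i b_i(X^{\gT})\le\sum_i b_i(X)=2$ together with $\chi(X^{\gT})=\chi(X)$. Your phrase ``$\chi(F)$ is controlled accordingly'' does not do this.

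Second, you need it to produce the orbit. A torus of rank $k\geq 2$ cannot act almost freely on an odd-dimensional rational sphere. The Serre spectral sequence of the Borel fibration has at most one nonzero differential and leaves infinite-dimensional equivariant cohomology, whereas almost free actions have finite-dimensional equivariant cohomology. Applied to $\gT^9/\gT^7$ acting on $F$, this yields a $\gT^8$ with a fixed point, which is the general fact the paper invokes. You then still need the Slice Theorem, as in the paper, in case the point found is fixed by all of $\gT^9$.
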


\begin{proof}
    The proof is similar to the case in even dimensions. We can pass to a $\gT^7$ with a fixed point, and we reduce further to the case where this fixed point is contained in a fixed-point component $G$ of a subtorus $\gT^6$ such that the codimension of $G$ is divisible by four. In particular, $\dim G$ is of the form $4g+1$. Applying Theorem \ref{thm:Representation} and the Partial Four Periodicity Theorem, we find that $G$ has four-periodic rational cohomology on degrees $1 \leq * \leq 4g$. Using Poincar\'e duality and the fact that $\pi_1(G)$ is finite by the Bonnet-Myers theorem, it follows that $G$ is a rational homology sphere. It holds generally now for smooth torus actions on rational spheres that a circle orbit exists. The entire torus acts on this circle orbit, so the torus has a circle orbit as well on this circle or, if not, nearby by the Slice Theorem.
\end{proof}

Similarly, we derive a corollary that extends \cite[Theorem A]{KWW2} in odd dimensions.

\begin{corollary}\label{cor:Ric_2-odd}
    If $M^{4n+1}$ is a closed Riemannian manifold with positive second intermediate Ricci curvature and an effective, isometric $\gT^{10}$ action with only connected isotropy groups, then $M$ is a rational homology sphere.
\end{corollary}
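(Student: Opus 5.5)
The plan is to run the argument behind Theorem \ref{thm:Ric_2-odd}, but now in the setting of connected isotropy groups, where we have more control, and then feed the periodicity information into a rational-homology-sphere conclusion for $M$ itself rather than for a single fixed-point component. Write $\dim M = 4n+1$.

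\textbf{Step 1: reduce to a fixed-point component of codimension divisible by four.}
\begin{itemize}
\item By the Isotropy Rank Lemma, the $\gT^{10}$-action has a subtorus $\gT^8$ with non-empty fixed-point set.
\item Since isotropy groups are connected, fixed-point components of subtori are well behaved, and the representation at a fixed point of $\gT^8$ is governed by a regular matroid (see \cite{KWW2}).
\item Passing to fixed-point components of circle subgroups (as in the proof sketch of Theorem \ref{thm:Ric_2}), I arrange a subtorus $\gT^6$ whose fixed-point component $G$ has codimension divisible by four. Then $\dim G = 4g+1$.
\end{itemize}

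\textbf{Step 2: show $G$ is a rational homology sphere.}
\begin{itemize}
\item Applying Theorem \ref{thm:Representation} to the isotropy representation of $\gT^6$ at a point of $G$, the Connectedness Lemma, Periodicity Lemma and Partial Four Periodicity Theorem give that $G$ has four-periodic rational cohomology in degrees $1 \le * \le 4g$. This is exactly as in Lemma \ref{lem:t6} and Theorem \ref{thm:Ric_2-odd}.
\item Since $\pi_1(G)$ is finite by Bonnet--Myers, $H^1(G;\mathbb{Q})=0$.
\item Four-periodicity then kills all degrees $\equiv 1 \bmod 4$.
\item Poincar\'e duality in dimension $4g+1$ pairs degrees $\equiv 0$ with degrees $\equiv 1 \bmod 4$, and degrees $\equiv 2$ with degrees $\equiv 3 \bmod 4$. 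For the $2,3$ classes, I use the connectedness of $G \to N_2$ to see that $H^2(G;\mathbb{Q})$ vanishes.
\item Altogether, $G$ is a rational homology sphere.
\end{itemize}

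\textbf{Step 3: lift the conclusion from $G$ to $M$.}
\begin{itemize}
\item This is where the extra rank ($\gT^{10}$ rather than $\gT^9$) and connected isotropy are used, following the proof of \cite[Theorem A]{KWW2} in odd dimensions.
\item The idea is a chain $G = M_k \subset M_{k-1} \subset \dots \subset M_0 = M$ of fixed-point components of circles. Each $M_{i+1}\subset M_i$ has codimension two or four, and the Connectedness Lemma makes the inclusion highly connected.
\item Connected isotropy lets me choose the chain so that each step satisfies the transversality and codimension hypotheses needed to apply the Connectedness Lemma to pairs of intersecting fixed-point sets.
\item Periodicity then propagates. I aim to show inductively that each $M_i$ has four-periodic rational cohomology in degrees $1 \le * \le \dim M_i - 1$ with $H^1 = 0$, which forces each $M_i$ to be a rational homology sphere.
\item I expect the extra circle (rank ten) to be what guarantees that at each stage a rank-six torus still acts on $M_i$, so Theorem \ref{thm:Representation} applies again.
\end{itemize}

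\textbf{Main obstacle.} Step 3 is the hard part. Connectedness gives that $H^*(M_i) \to H^*(M_{i+1})$ is an isomorphism only in a range of degrees, roughly up to $\dim M_{i+1}$ minus the codimension, so the top-degree classes of $M_i$ are not controlled directly. I would try to control them as follows:
\begin{itemize}
\item Combine the periodicity from the Periodicity Lemma applied to the pair $M_{i+1} \subset M_i$ itself, whose codimension is small relative to $\dim M_i$ once the codimension estimate $3\cod \le \dim$ from Theorem \ref{thm:Representation} holds.
\item Use Poincar\'e duality on $M_i$.
\item If periodicity alone does not close the argument, fall back on the equality $\chi(M_i^{\gT}) = \chi(M_i)$ for torus actions and the vanishing of odd Betti numbers of fixed-point components, deducing $\sum b_i(M)$ from $\sum b_i(M^{S^1})$ via Borel localization.
\end{itemize}
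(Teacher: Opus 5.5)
\textbf{There is a genuine gap in Step 3.} Your Steps 1 and 2 essentially redo the proof of Theorem~\ref{thm:Ric_2-odd}, so they only give information about the fixed-point component $G$. Step 3 is the actual content of the corollary, and there you have a plan rather than an argument; the mechanisms you propose would not work.
\begin{enumerate}
\item You give no reason why a chain of circle fixed-point components from $M$ down to $G$ can be chosen with every codimension equal to two or four. These codimensions are determined by the weights and can be large.
\item The Connectedness Lemma gives nothing when the codimension is large compared to the dimension of the submanifold, which is the typical situation for a $\gT^6$-fixed-point component.
\item More fundamentally, periodicity of $H^*(M;\mathbb{Q})$ has to come from the isotropy representation on $T_xM$ at a point $x$ fixed by a subtorus of large rank. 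That is what produces fixed-point components of small codimension in $M$ itself. Analyzing the $\gT^6$-representation at a point of $G$ only controls $G$.
\item Your fallback is invalid. Borel localization gives $\sum_i b_i(M^{S^1})\le \sum_i b_i(M)$, which bounds $M$ from below, not from above. Also $\chi(M)=0$ in odd dimensions, so Euler characteristic identities carry no information.
\end{enumerate}

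\textbf{The missing idea is how the rank ten and the connected isotropy groups actually enter.} The paper does not lift the rational-sphere property from $G$ to $M$. It uses that property only to produce a circle orbit of the torus action; this is Theorem~\ref{thm:Ric_2-odd}. The isotropy group of a point on a circle orbit of $\gT^{10}$ is a closed subgroup of dimension nine. Since isotropy groups are connected, it is a torus $\gT^9$, and it has a fixed point in $M$. That is one rank more than the Isotropy Rank Lemma provides; your Step 1 only reaches $\gT^8$. With a $\gT^9$ fixed point and connected isotropy, the argument of \cite[Section 5.4]{KWW2} gives four-periodic rational cohomology of $M$ in degrees $1\le i\le 4n$. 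Then $b_1(M)=0$ (Bonnet--Myers) and Poincar\'e duality in dimension $4n+1$ finish the proof.

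\textbf{A smaller error in Step 2.} Your treatment of degrees $\equiv 2,3 \bmod 4$ is wrong: connectedness of $N_1\cap N_2\to N_2$ does not force $H^2(G;\mathbb{Q})=0$. The mechanism that works is that the periodicity class lies in $H^4(G;\mathbb{Q})\cong H^{4g-3}(G;\mathbb{Q})=0$, so it is zero. Surjectivity of multiplication by it then kills degrees $5$ through $4g$, and Poincar\'e duality handles degrees $2$ and $3$ when $g\ge 2$.
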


\begin{proof}
    By Theorem \ref{thm:Ric_2-odd}, there is a circle orbit. Fixing any point in this orbit shows that some subgroup $\gT^9$ has a fixed-point. The proof in \cite{KWW2} now shows that $M$ has $4$-periodic rational cohomology on degrees $1 \leq * \leq \dim M - 1$ (see \cite[Section 5.4]{KWW2} for a direct proof). Since again we have $\dim M \equiv 1 \bmod 4$, Poincar\'e duality implies that $M$ is a rational sphere. 
\end{proof}

We remark on the congruence condition in these theorems. 
Additional tools are developed in \cite{KWW1} to extend results into dimensions $4n+2$ and $4n+3$. We do not yet have a $b_3$ Lemma (see \cite[Lemma 3.2]{KWW1}), for example. If one could verify it, then one no longer needs the congruence condition in Theorem~\ref{thm:Representation}. In particular, one could skip the step of passing to a codimension-one subtorus to preserve the required congruence condition. In addition, one could apply Theorem \ref{thm:rank-5 cocircuits}, which only requires a rank-five matroid (and correspondingly a rank-five torus in the application). In other words, one could cut by two the rank of the torus. 
We propose this here as a problem for future study.
\begin{conjecture}

\label{con:T7}
An even-dimensional closed manifold admitting a $T^7$-invariant Riemannian metric with positive $\Ric_2$ has positive Euler characteristic.
\end{conjecture}

Even with this conjectured improvement, the question would remain whether the torus rank could be reduced to six or to five. The most symmetric example known so far with even dimension, positive second intermediate Ricci curvature, and {\it Euler characteristic zero} has torus symmetry of rank four (see \cite{D_DV_GA_RV-pre}).

% \bibliographystyle{alpha}
% \bibliography{references}

\end{document}